\newtheorem{prop}{Proposition}[section]
\newtheorem{cor}[prop]{Corollary}
\newtheorem{lem}[prop]{Lemma}
\newtheorem{thm}[prop]{Theorem}
\newtheorem{defn}[prop]{Definition}
\newtheorem{remark}[prop]{Remark}
\newcommand*{\mb}[1]{\ensuremath{\mathbb{#1}}}
\newcommand*{\mc}[1]{\ensuremath{\mathcal{#1}}}
\newcommand*{\mf}[1]{\ensuremath{\mathfrak{#1}}}
\newcommand*{\Z}{\mb{Z}}
\newcommand*{\Q}{\mb{Q}}
\newcommand*{\dprod}{\displaystyle\prod}
\newcommand*{\clos}[1]{\ensuremath{\overline{#1}}}
\newcommand*{\into}{\hookrightarrow}
\DeclareSymbolFont{cyrletters}{OT2}{wncyr}{m}{n}
\DeclareMathSymbol{\Sha}{\mathalpha}{cyrletters}{"58}
\DeclareMathOperator{\Gal}{Gal}
\begin{document}
	
	\title{A Homological Definition for the Tate--Shafarevich Group of a Pell Conic}
	
	\author{Roy Zhao}
	
	\date{\today}
	
	\begin{abstract}
	Franz Lemmermeyer's previous work laid the framework for a description of the arithmetic of Pell conics, which is analogous to that of elliptic curves. He describes a group law on conics and conjectures the existence of an analogous Tate--Shafarevich group with order the squared ideals of the narrow class group. In this article, we provide a cohomological definition of the Tate--Shafarevich group and show that its order is as Lemmermeyer conjectured.
	\end{abstract}
	
	\maketitle
	\tableofcontents
	
	
	\section{Introduction}
A fundamental algebraic invariant associated to an elliptic curve $E/\Q$ is the Tate--Shafarevich group, which measures the failure of the Hasse principle. This group is defined as
\[\Sha(E/\Q) := \ker\left[ H^1(\Q, E(\overline{\Q})) \to \prod_{p \text{ prime}} H^1(\Q_p, E(\overline{\Q_p}))\right].\]
In order to better understand this group, recent work has been done in order to find analogues in simpler cases which we understand better. Lemmermeyer has done exactly this by attempting to find analogies between elliptic curves and conics, and more specifically Pell conics \cite{lemm03}. Pell conics are affine curves given by the equation $x^2 - Dy^2 = 4$, where $D$ is the discriminant of a quadratic number field $K$.

\begin{defn}
	Let $D$ be the discriminant of a quadratic field $K$ and let $\mc{C}: x^2 - Dy^2 = 4$ be a Pell conic. We can endow the curve with a group structure by 
	\[(a, b) + (c, d) := \left(\frac{ac + Dbd}{2}, \frac{ad + bc}{2}\right),\]
	with the identity being $(2, 0)$.
\end{defn}

We look at $x^2 - Dy^2 = 4$ instead of the na\"{i}ve choice $x^2 - Dy^2 = 1$ in order for the $\Z$ points to capture all of the units of norm $1$ in $\mc{O}_K$ if $D \equiv 1 \pmod 4$. Equipped with a group law, we can now define the Tate--Shafarevich group.

\begin{defn}
	We define the Tate--Shafarevich group for a Pell conic $\mc{C}$ as
	\[\Sha(\mc{C}/\Z) := \ker\left(H^1(\Q, \mc{C}(\clos{\Z})) \to \prod_p H^1(\Q_p, \mc{C}(\clos{\Z_p}))\right).\]
\end{defn}

As we will show in Proposition \ref{prop:Cohomology Q points trivial}, we can identify $H^1(\Q, \mc{C}(\clos{\Q})) \cong \Q^\times / N(K^\times)$ and similarly $H^1(\Q_p, \mc{C}(\clos{\Q_p})) \cong \Q_p^\times / N(K_v^\times)$, and consequently using the same definition as the elliptic curve case gives a trivial group by the Hasse Norm Theorem. Hence, we use a definition relating integral points, as opposed to rational points. But, this is not far off from the elliptic curve case because elliptic curves are projective and hence all points can be scaled to be integral. In some sense, looking at integral points as opposed to the rational points captures the difference between an affine and projective curve.

In his paper, Lemmermeyer conjectured the existence of a cohomological definition of a Tate--Shafarevich group with order the squared ideal classes in the narrow class group. In this paper, we prove his conjecture correct and prove the following main theorem.
\begin{thm}
	Using our new definition,
	\[\#\Sha(\mc{C}/\Z) = \# (Cl^+(K)^2).\]
\end{thm}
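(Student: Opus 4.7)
The plan is to realize $\mc{C}$ as the kernel of the norm map on a Weil restriction and then analyze the resulting long exact sequence in Galois cohomology, finishing with an appeal to genus theory. Via the parametrization $(a,b) \mapsto (a+b\sqrt{D})/2$, I would identify $\mc{C}(\clos{\Z})$ with the kernel of the norm map $N \colon (\mc{O}_K \otimes_\Z \clos{\Z})^\times \to \clos{\Z}^\times$. Because $\mc{O}_K \otimes_\Z \clos{\Z}$ splits canonically as $\clos{\Z} \times \clos{\Z}$, the norm is surjective, yielding a short exact sequence of $G_\Q$-modules
\[ 1 \to \mc{C}(\clos{\Z}) \to (\mc{O}_K \otimes_\Z \clos{\Z})^\times \xrightarrow{N} \clos{\Z}^\times \to 1, \]
together with analogous sequences over each completion.

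Next, I would compute the cohomology of the middle and right-hand terms. Shapiro's lemma gives $H^i(\Q, (\mc{O}_K \otimes \clos{\Z})^\times) = H^i(K, \clos{\Z}^\times)$, and the divisor sequence $1 \to \clos{\Z}^\times \to \clos{\Q}^\times \to \clos{\Q}^\times / \clos{\Z}^\times \to 1$ combined with Hilbert 90 identifies $H^1(F, \clos{\Z}^\times)$ with $Cl(F)$ for any number field $F$. Thus $H^1(\Q, \clos{\Z}^\times) = 0$ and $H^1(\Q, (\mc{O}_K \otimes \clos{\Z})^\times) = Cl(K)$, while the local analogues vanish because $p$-adic fields have trivial class group. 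Feeding these into the long exact sequence yields
\[ 0 \to \Z^\times / N(\mc{O}_K^\times) \to H^1(\Q, \mc{C}(\clos{\Z})) \to Cl(K) \to 0 \]
globally, and $H^1(\Q_p, \mc{C}(\clos{\Z_p})) \cong \Z_p^\times / N(\mc{O}_{K_p}^\times)$ locally, the latter being $\Z/2\Z$ at primes ramified in $K$ and trivial otherwise.

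Third, I would apply the snake lemma to the natural comparison diagram of the global and local sequences. The right-hand vertical map lands in zero, so the snake chase expresses $\Sha(\mc{C}/\Z)$ as an extension of the kernel of the connecting map $Cl(K) \to \prod_p \Z_p^\times / N(\mc{O}_{K_p}^\times)$ by the kernel of the global-to-local map on the unit cokernel terms. Local reciprocity identifies the connecting map with the classical Gauss genus map.

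The main obstacle is the last step: matching this cohomological computation against $Cl^+(K)^2$. The cohomology naturally produces the wide class group $Cl(K)$, whereas the theorem is stated in terms of the narrow one. Absorbing the discrepancy requires careful bookkeeping of the contributions from $\Z^\times / N(\mc{O}_K^\times)$ (which detects whether a fundamental unit of $K$ has norm $-1$) and of the archimedean place, which is absent from the product defining $\Sha$; combining this with Gauss's count $|Cl^+(K)/Cl^+(K)^2| = 2^{t-1}$, where $t$ is the number of ramified places of $K/\Q$, and the narrow-versus-wide exact sequence should yield the desired equality $\#\Sha(\mc{C}/\Z) = \#Cl^+(K)^2$.
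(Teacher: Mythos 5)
Your route---realizing $\mc{C}(\clos{\Z})$ as the norm-one subgroup of $(\mc{O}_K\otimes\clos{\Z})^\times$ and chasing the long exact sequence---is genuinely different from the paper's, which instead twists the Galois action on $\mc{O}_K\otimes\mc{O}_L$ by a locally trivial cocycle and extracts a norm-one fractional ideal directly. But two of your cohomology computations are false, and they are load-bearing. First, $\mc{O}_K\otimes_\Z\clos{\Z}$ does \emph{not} split as $\clos{\Z}\times\clos{\Z}$: the natural map $a\otimes b\mapsto(ab,\conj{a}b)$ is injective but not surjective, since a preimage of $(1,0)$ would force $1\in\sqrt{D}\,\clos{\Z}$, and $\sqrt{D}$ is not a unit of $\clos{\Z}$ (the splitting fails over every ramified prime). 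Hence $(\mc{O}_K\otimes\clos{\Z})^\times$ is a proper subgroup of the induced module and Shapiro's lemma does not apply as stated. Second, the identification $H^1(F,\clos{\Z}^\times)\cong Cl(F)$ is wrong. The divisor sequence plus Hilbert 90 gives $H^1(F,\clos{\Z}^\times)\cong(\bigoplus_v\Q)/\mathrm{im}(F^\times)$, which is an extension of $\bigoplus_v\Q/\Z$ by $Cl(F)$; the $\Q/\Z$ factors come from ramification in $\clos{\Z}/\mc{O}_F$. Concretely, $\sigma\mapsto\sigma(\sqrt{p})/\sqrt{p}$ is a cocycle valued in $\{\pm1\}\subset\clos{\Z}^\times$ that is not a coboundary of a unit (any $u$ with $\sigma(u)/u=\sigma(\sqrt{p})/\sqrt{p}$ satisfies $u=q\sqrt{p}$ with $q\in\Q^\times$, which is never a unit), so $H^1(\Q,\clos{\Z}^\times)\neq 0$ even though $Cl(\Q)=1$. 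The same defect kills the claimed local vanishing. Only the \emph{locally trivial} subgroup of $H^1(F,\clos{\Z}^\times)$ is $Cl(F)$ --- this is exactly the paper's closing remark in Section 3 --- so your global short exact sequence ending in $Cl(K)$ is not available as written.

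These defects might be repairable by running the entire diagram at the level of locally trivial classes (replacing each $H^1$ by its Tate--Shafarevich subgroup, where the identifications you want do hold), but then the step you yourself flag as "the main obstacle" is where all the remaining content sits: one must still prove that the resulting kernel has order exactly $\#\bigl(Cl^+(K)^2\bigr)$, which requires showing that the relevant ideal classes are precisely the norm-one ones and tracking the contributions of $N(\varepsilon)=\pm1$ and the missing archimedean place. That bookkeeping is asserted ("should yield"), not carried out, and the passage from $Cl(K)$ to $Cl^+(K)^2$ is precisely the delicate point of the theorem; the paper handles it by showing $N(\mf{a})=1$ for the associated ideal and observing that norm-one ideals of a quadratic field are squares in the narrow class group. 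As it stands the proposal does not establish the stated equality.
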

In Section $2$, we provide some basic algebraic results necessary to prove the theorem in Section $3$. Then, in Section $4$, we set up some machinery that will be used in Section $5$ to provide a geometrical interpretation of the Tate--Shafarevich group. In the elliptic curve case, the group captures principal homogeneous spaces of the curve which are locally trivial. In the case for a Pell conic, we show that the principal homogeneous spaces of a curve correspond exactly to binary quadratic forms, which is where the narrow class group comes from.

As a matter of notation, for a Galois extension $L/K$ and a $\Gal(L/K)$-module $M$, we write $H^i(L/K, M) := H^i(\Gal(L/K), M)$ for the $i$th group cohomology. In the case of the separable closure, we write $H^i(K, M) := H^i(\Gal(K^s/K), M)$. In addition, we denote the class group of a number field $K$ by $Cl(K)$, and the narrow class group by $Cl^+(K)$. We write $\mc{O}_L$ for the ring of integers inside a number field $L$.

	\section{Preliminary Algebraic Results}
First, we prove some basic algebraic results that we will use for the proof of the main theorem.

\begin{prop}\label{ring of integers tensor padic}
	Let $L/K$ be a finite extension of number fields and let $v$ be a non-archimedean place of $K$. Then
	\[\mc{O}_L \otimes_{\mc{O}_K} \mc{O}_{K_v} \cong \prod_{\lambda|v} \mc{O}_{L_\lambda},\]
	where the product is taken over all places $\lambda \mid v$ of $L$.
\end{prop}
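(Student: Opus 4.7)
The plan is to identify the tensor product with a $\mf{p}_v$-adic completion of $\mc{O}_L$, and then decompose that completion using the Chinese Remainder Theorem. The two inputs I need are that $\mc{O}_L$ is a finitely generated $\mc{O}_K$-module (standard, as $L/K$ is finite and $\mc{O}_K$ is Noetherian with $\mc{O}_L$ its integral closure in $L$) and the prime factorization $\mf{p}_v \mc{O}_L = \prod_{\lambda \mid v} \lambda^{e_\lambda}$.

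First I would establish
\[\mc{O}_L \otimes_{\mc{O}_K} \mc{O}_{K_v} \;\cong\; \varprojlim_n \mc{O}_L / \mf{p}_v^n \mc{O}_L.\]
This is an instance of the general fact that if $A$ is Noetherian, $I \subseteq A$ an ideal, and $M$ a finitely generated $A$-module, then $M \otimes_A \widehat{A}_I$ agrees with the $I$-adic completion $\widehat{M}_I$; I apply it with $A = \mc{O}_K$, $I = \mf{p}_v$, and $M = \mc{O}_L$. Next, since $\mf{p}_v^n \mc{O}_L = \prod_{\lambda \mid v} \lambda^{n e_\lambda}$ and the ideals $\lambda^{n e_\lambda}$ are pairwise coprime for distinct $\lambda$, the Chinese Remainder Theorem gives
\[\mc{O}_L / \mf{p}_v^n \mc{O}_L \;\cong\; \prod_{\lambda \mid v} \mc{O}_L / \lambda^{n e_\lambda}.\]
Taking the inverse limit over $n$, and using that a finite direct product commutes with inverse limits,
\[\mc{O}_L \otimes_{\mc{O}_K} \mc{O}_{K_v} \;\cong\; \prod_{\lambda \mid v} \varprojlim_n \mc{O}_L / \lambda^{n e_\lambda} \;\cong\; \prod_{\lambda \mid v} \mc{O}_{L_\lambda},\]
where the final isomorphism uses that $\{n e_\lambda\}_{n \geq 1}$ is cofinal in $\mb{N}$, so the limit along the subsequence $\lambda^{n e_\lambda}$ coincides with the usual $\lambda$-adic completion, which by definition is $\mc{O}_{L_\lambda}$.

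There is no genuine obstacle here; everything is standard commutative algebra. The one point requiring a little attention is verifying that the tensor product really produces the $\mf{p}_v$-adic completion, which relies essentially on $\mc{O}_L$ being finitely generated over $\mc{O}_K$, and on keeping straight the different adic topologies (the $\mf{p}_v$-adic one on the left versus the $\lambda$-adic ones on the right) when passing through the limit.
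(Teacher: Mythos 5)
Your proof is correct and follows essentially the same route as the paper: identify $\mc{O}_L \otimes_{\mc{O}_K} \mc{O}_{K_v}$ with the $\mf{p}_v$-adic completion of $\mc{O}_L$, apply the Chinese Remainder Theorem to each quotient $\mc{O}_L/\mf{p}_v^n\mc{O}_L$, and pass to the inverse limit. You are in fact slightly more careful than the paper, which asserts the identification $\mc{O}_L \otimes_{\mc{O}_K} \mc{O}_{K_v} \cong \varprojlim_n \mc{O}_L/\mf{p}_v^n\mc{O}_L$ without citing the finite generation of $\mc{O}_L$ over $\mc{O}_K$ or the cofinality of the exponents $ne_\lambda$.
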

\begin{proof}
	Let $\mf{p}$ be the prime associated with $v$. Factoring gives $\mf{p}\mc{O}_L = \dprod_{\lambda \mid v} \mf{q_\lambda}^{e_\lambda}$, where $e_\lambda$ is the ramification index of $\lambda$ over $v$. We know $\mc{O}_{K_v} = \varprojlim_n \mc{O}_K/\mf{p}^n\mc{O}_K$. We can use the Chinese Remainder Theorem to get that
	\[\mc{O}_L \otimes_{\mc{O}_K} \mc{O}_{K_v} = \varprojlim_n \mc{O}_L/\mf{p}^n \mc{O}_L = \varprojlim_n \prod_{\lambda \mid v} \mc{O}_L/\mf{q}_\lambda^{ne_\lambda}\mc{O}_L = \prod_{\lambda \mid v} \varprojlim_n \mc{O}_L/\mf{q}_\lambda^{ne_\lambda}\mc{O}_L = \prod_{\lambda \mid v} \mc{O}_{L_\lambda}.\]
\end{proof}

\begin{lem}\label{lem:Locally equal implies globally equal}
	Let $R$ be a Noetherian ring and $M$ a module. Let $N, N' \subset M$ be finitely generated submodules such that for every maximal ideal $\mf{m} \subset R$, the completions $\hat{N}_\mf{m}, \hat{N}'_\mf{m} \subset \hat{M}_\mf{m}$ are equal. Then $N = N'$.
\end{lem}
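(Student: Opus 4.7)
The plan is to reduce to showing $N \subset N'$ (with the reverse containment following by symmetry) and to verify this by a local-global argument combined with faithful flatness of completion. Set $Q := (N + N')/N'$, viewed as a submodule of $M/N'$. This is finitely generated over $R$, and $Q = 0$ if and only if $N \subset N'$, so the goal becomes showing $Q = 0$.

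Fix a maximal ideal $\mf{m} \subset R$. Applying $\mf{m}$-adic completion to the short exact sequence
\[0 \to N' \to N + N' \to Q \to 0\]
of finitely generated $R$-modules gives, using that completion is exact on finitely generated modules over a Noetherian ring,
\[\hat{Q}_\mf{m} \;=\; (N + N')^\wedge_\mf{m}/\hat{N}'_\mf{m} \;=\; (\hat{N}_\mf{m} + \hat{N}'_\mf{m})/\hat{N}'_\mf{m}.\]
The hypothesis $\hat{N}_\mf{m} = \hat{N}'_\mf{m}$ as submodules of $\hat{M}_\mf{m}$ immediately forces $\hat{Q}_\mf{m} = 0$.

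Now invoke faithful flatness: since $R_\mf{m}$ is a Noetherian local ring, its completion $\hat{R}_\mf{m}$ is faithfully flat over $R_\mf{m}$. Because $Q$ is finitely generated, $\hat{Q}_\mf{m} = Q_\mf{m} \otimes_{R_\mf{m}} \hat{R}_\mf{m}$, and so $\hat{Q}_\mf{m} = 0$ forces $Q_\mf{m} = 0$ for every maximal $\mf{m}$. A finitely generated module whose localization at every maximal ideal vanishes is itself zero, so $Q = 0$, giving $N \subset N'$. Swapping $N$ and $N'$ finishes the proof.

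The only real point requiring care — I would not call it a genuine obstacle — is the compatibility bookkeeping: that $\mf{m}$-adic completion commutes with the formation of $N + N'$ and $(N + N')/N'$ inside $\hat{M}_\mf{m}$, even when $M$ itself is not assumed finitely generated. This is safe because the short exact sequence above lives entirely among finitely generated $R$-modules, where completion (equivalently, tensoring with the flat $R$-algebra $\hat{R}_\mf{m}$ after localizing) is exact and preserves sums and quotients.
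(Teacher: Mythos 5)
Your proof is correct and follows essentially the same strategy as the paper: form a finitely generated quotient measuring the failure of one inclusion, use flatness of the completion to show that quotient vanishes after completing at every maximal ideal, and descend to conclude it is zero. The only cosmetic differences are that the paper works with $N'/(N\cap N')$ rather than $(N+N')/N'$ and finishes with Nakayama's lemma instead of faithful flatness plus the local-global vanishing criterion.
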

\begin{proof}
	Let $N''$ denote the quotient $N'/(N \cap N')$. Since $\hat{R}_\mf{m}$ is flat, we have an exact sequence
	\[0 \rightarrow (N \cap N') \otimes \hat{R}_\mf{m} \rightarrow \hat{N}_\mf{m}' \rightarrow \hat{N}_\mf{m}'' \rightarrow 0.\]
	Again by flatness of $\hat{R}_\mf{m}$, we have $(N \cap N') \otimes \hat{R}_\mf{m} \cong \hat{N}_\mf{m} \cap \hat{N}'_\mf{m} = \hat{N}_\mf{m}'$. Thus $\hat{N}_\mf{m}'' = 0$ and because the inverse system $\{N''/\mf{m}^nN''\}_n$ is surjective, we have $N'' = \mf{m}N''$ for all $\mf{m}$. An application of Nakayama's lemma gives us $N'' = 0$ and $N \cap N' = N'$, so $N' \subset N$. By symmetry, the other direction holds and therefore $N = N'$.
\end{proof}

The following is the main lemma that will be used in the proof. It allows us to treat the local condition that a cocyle is trivial locally as a global condition that the cocycle becomes trivial under a base change.
\begin{lem}\label{lem:group scheme local kernel}
	Let $L/K$ be a finite Galois extension of number fields. Let $v$ be a non-archimedean place of $K$ and $\lambda$ a place of $L$ lying over $v$. Let $G/\mc{O}_K$ be a group scheme and suppose that for all $\mc{O}_K$-algebras $A, B$, the $A$ and $B$ points satisfy $G(A \times B) \cong G(A) \times G(B)$. Then
	\begin{align*}&\ker\left[H^1(L/K, G(\mc{O}_L)) \to H^1(L_\lambda/ K_v, G(\mc{O}_{L_\lambda}))\right]\\
	= &\ker\left[H^1(L/K, G(\mc{O}_L)) \to H^1(L/K, G(\mc{O}_L \otimes_{\mc{O}_K} \mc{O}_{K_v}))\right].
	\end{align*}
\end{lem}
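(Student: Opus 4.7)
The plan is to identify $G(\mc{O}_L \otimes_{\mc{O}_K} \mc{O}_{K_v})$ with an induced $\Gal(L/K)$-module whose cohomology, via Shapiro's lemma, agrees with the local cohomology $H^1(L_\lambda/K_v, G(\mc{O}_{L_\lambda}))$, and then to check that the two natural maps out of $H^1(L/K, G(\mc{O}_L))$ coincide under this identification.

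First, combining Proposition \ref{ring of integers tensor padic} with the hypothesis that $G$ sends products of $\mc{O}_K$-algebras to products of sets yields a $\Gal(L/K)$-equivariant isomorphism
\[
G(\mc{O}_L \otimes_{\mc{O}_K} \mc{O}_{K_v}) \cong \prod_{\lambda'|v} G(\mc{O}_{L_{\lambda'}}),
\]
where the action on the right permutes the factors according to the transitive action of $\Gal(L/K)$ on the places above $v$, and acts on each factor through its decomposition group. Let $D_\lambda = \Gal(L_\lambda/K_v) \subseteq \Gal(L/K)$ be the decomposition group at $\lambda$; choosing coset representatives $\sigma_i$ of $\Gal(L/K)/D_\lambda$ so that $\sigma_i \lambda$ exhausts the places above $v$, the standard identification realizes $\prod_{\lambda'|v} G(\mc{O}_{L_{\lambda'}})$ as the induced module $\mathrm{Ind}_{D_\lambda}^{\Gal(L/K)} G(\mc{O}_{L_\lambda})$. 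Shapiro's lemma then produces
\[
H^1(L/K, G(\mc{O}_L \otimes_{\mc{O}_K} \mc{O}_{K_v})) \cong H^1(D_\lambda, G(\mc{O}_{L_\lambda})) = H^1(L_\lambda/K_v, G(\mc{O}_{L_\lambda})).
\]

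The remaining task, which I expect to be the main technical point, is to verify that this Shapiro isomorphism intertwines the two maps out of $H^1(L/K, G(\mc{O}_L))$ in the statement: the localization map (restriction to $D_\lambda$ composed with the structure map $G(\mc{O}_L) \to G(\mc{O}_{L_\lambda})$) and the map induced by $\mc{O}_L \hookrightarrow \mc{O}_L \otimes_{\mc{O}_K} \mc{O}_{K_v}$. At the cocycle level, Shapiro's isomorphism is induced by projection onto the $\lambda$-factor of $\prod_{\lambda'|v} G(\mc{O}_{L_{\lambda'}})$, and the diagonal inclusion $\mc{O}_L \hookrightarrow \prod_{\lambda'|v} \mc{O}_{L_{\lambda'}}$ followed by this projection is precisely the canonical inclusion $\mc{O}_L \hookrightarrow \mc{O}_{L_\lambda}$. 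Tracing a cocycle $c \in Z^1(\Gal(L/K), G(\mc{O}_L))$ through either composition therefore yields the same $D_\lambda$-cocycle valued in $G(\mc{O}_{L_\lambda})$. Once this commutativity is established, the two kernels in the statement agree and the lemma follows.
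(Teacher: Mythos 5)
Your proposal is correct, and it packages the argument differently from the paper. The paper does not invoke Shapiro's lemma: it proves the nontrivial inclusion by hand, taking a cocycle $c$ with $c(\sigma)=\sigma(m_\lambda)-m_\lambda$ on $\Gal(L_\lambda/K_v)$, choosing coset representatives $\tau_{\lambda'}$ of $\Gal(L/K)/\Gal(L_\lambda/K_v)$, and explicitly writing down the global trivializing element $m_v=\prod_{\lambda'}\bigl[\tau_{\lambda'}(m_\lambda)-c(\tau_{\lambda'})\bigr]\in\prod_{\lambda'}G(\mc{O}_{L_{\lambda'}})$, then verifying independence of choices and the coboundary identity. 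That computation is exactly the explicit inverse of the Shapiro isomorphism in degree $1$, so the two proofs have the same mathematical content; you recognize the semi-local module $G(\mc{O}_L\otimes_{\mc{O}_K}\mc{O}_{K_v})\cong\prod_{\lambda'\mid v}G(\mc{O}_{L_{\lambda'}})$ as $\mathrm{Ind}_{D_\lambda}^{\Gal(L/K)}G(\mc{O}_{L_\lambda})$ and quote the general theorem, which is shorter and yields the stronger conclusion that $H^1(L/K,G(\mc{O}_L\otimes_{\mc{O}_K}\mc{O}_{K_v}))\to H^1(L_\lambda/K_v,G(\mc{O}_{L_\lambda}))$ is an isomorphism, not merely that the two kernels coincide; the paper's version is self-contained and keeps everything at the level of elementary cocycle manipulations. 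Two small points you should make explicit if you write this up: the hypothesis $G(A\times B)\cong G(A)\times G(B)$ must be iterated to finite products, and the Galois-equivariance of $G(\mc{O}_L\otimes_{\mc{O}_K}\mc{O}_{K_v})\cong\prod_{\lambda'\mid v}G(\mc{O}_{L_{\lambda'}})$ (permutation of factors via the transitive action on places over $v$, with $D_\lambda$ acting on the $\lambda$-factor through $\Gal(L_\lambda/K_v)$) needs to be checked from Proposition \ref{ring of integers tensor padic}, since that is what makes the induced-module identification legitimate; also, the Shapiro map is restriction to $D_\lambda$ followed by projection onto the $\lambda$-factor, and you correctly note that precomposing with the diagonal map $\mc{O}_L\to\prod_{\lambda'\mid v}\mc{O}_{L_{\lambda'}}$ recovers the completion map $\mc{O}_L\to\mc{O}_{L_\lambda}$, which is the commutativity needed to compare the two kernels.
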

\begin{proof}
	Corollary \ref{ring of integers tensor padic} says that $\mc{O}_L \otimes \mc{O}_{K_v} \cong \dprod_{\lambda' \mid v} \mc{O}_{L_{\lambda'}}$. The projection onto $\mc{O}_{L_\lambda}$ gives a map $\mc{O}_L \otimes \mc{O}_{K_v} \to \mc{O}_{L_\lambda}$ that makes the following diagram commute.
	\[\begin{tikzcd} H^1(L/K, G(\mc{O}_L)) \arrow[r] \arrow[dr] & H^1(L/K, G(\mc{O}_L \otimes \mc{O}_{K_v})) \arrow[d] \\ & H^1(L_\lambda/K_v, G(\mc{O}_{L_\lambda}))\end{tikzcd}\]
	Thus, the kernel of the top morphism is automatically in the kernel of the diagonal morphism.
	
	Now take a cocycle $c$ in the kernel of the diagonal map. The locally trivial condition on $c$ tells us that there exists some $m_\lambda \in G(\mc{O}_{L_\lambda})$ such that for all $\sigma \in \Gal(L_\lambda/K_v)$, we have $c(\sigma) = \sigma(m_\lambda) - m_\lambda$. Fix such an $m_\lambda$. Our goal is to generate an element $m_v \in G(\mc{O}_L \otimes \mc{O}_{K_v})$ such that $c(\sigma) = \sigma(m_v) - m_v$ for all $\sigma \in \Gal(L/K)$. First let $\mf{p} \subset \mc{O}_K$ be the prime ideal associated with $v$ and factorize it as $\mf{p}\mc{O}_L = \dprod_{\lambda'} \mf{q}_{\lambda'}^e$. Let $\{\tau_{\lambda'}\}_{\lambda'}$ denote a set of representatives of the cosets of $\Gal(L/K)/\Gal(L_\lambda/K_v)$, where $\tau_{\lambda'}(\mf{q}_\lambda) = \mf{q}_{\lambda'}$. For each place $\lambda'$, the automorphism $\tau_{\lambda'}$ induces an isomorphism $\mc{O}_{L_\lambda} \to \mc{O}_{L_{\lambda'}}$. We claim that since $c$ is a boundary when we embed $\mc{O}_L$ into $\mc{O}_{L_\lambda}$, it is also a boundary regardless of the choice of place $\lambda'$. To see why, for any $\sigma \in \Gal(L_{\lambda'}/K_v)$, there exists a unique $\sigma' \in \Gal(L_\lambda/K_v)$ such that $\sigma = \tau_{\lambda'} \sigma' \tau_{\lambda'}^{-1}$ and using the $1$-cocycle condition repeatedly gives
	\[c(\sigma) = c(\tau_{\lambda'}\sigma' \tau_{\lambda'}) = \sigma(\tau_{\lambda'}(m_\lambda) - c(\tau_{\lambda'})) - (\tau_{\lambda'}(m_\lambda) - c(\tau_{\lambda'})),\]
	thus proving our claim. This is independent of the choice of representative because for another representative $\tau_{\lambda'}'$, there must exist $\sigma \in \Gal(L_\lambda/K_v)$ such that $\tau_{\lambda'} = \tau_{\lambda'}'\sigma$. Then
	\[\tau_{\lambda'}(m_\lambda) - c(\tau_{\lambda'}) = \tau'_{\lambda'}(\sigma(m_\lambda)) - c(\tau_{\lambda'}'\sigma) = \tau'_{\lambda'}(\sigma(m_\lambda)) - (\tau_{\lambda'}'(c(\sigma)) + c(\tau_{\lambda'}'))\]
	\[= \tau'_{\lambda'} (\sigma(m_\lambda) - (\sigma(m_\lambda) - m_\lambda)) - c(\tau'_{\lambda'}) = \tau'_{\lambda'}(m_\lambda) - c(\tau_{\lambda'}').\]
	We know $\mc{O}_L \otimes \mc{O}_{K_v}\cong \dprod_\lambda \mc{O}_{L_\lambda}$ and since $G(A \times B) \cong G(A) \times G(B)$, specifying $m_v \in G(\mc{O}_L \otimes \mc{O}_{K_v}) \cong \dprod_\lambda G(\mc{O}_{L_\lambda})$ is equivalent to specifying an element in $G(\mc{O}_{L_{\lambda'}})$ for each place $\lambda'$. Let $m_v = \dprod_{\lambda'} [\tau_{\lambda'}(m_\lambda) - c(\tau_{\lambda'})]$. Now we need to show that $m_v$ satisfies $c(\sigma) = \sigma(m_v) - m_v$. Since we chose $\lambda$ arbitrarily and since we showed that $m_v$ is independent of our choice of representatives, it suffices to show that if $\sigma(\mf{p}_\lambda) = \mf{p}_{\lambda'}$, then the $\lambda'$th component of $\sigma(m_v) - m_v$ is $c(\sigma)$. Let $\sigma = \tau_{\lambda'}\sigma'$ for $\sigma' \in \Gal(L_\lambda/K_v)$. The $\lambda'$th component of $\sigma(m_v) - m_v$ is
	\[\tau_{\lambda'}(\sigma'(m_\lambda)) - (\tau_{\lambda'}(m_\lambda) - c(\tau_{\lambda'})) = \tau_{\lambda'}(c(\sigma')) + c(\tau_{\lambda'}) = c(\tau_{\lambda'} \sigma') = c(\sigma).\]
	Thus, we have found an $m_v \in G(\mc{O}_L \otimes \mc{O}_{K_v})$ which finishes showing the equality of the two kernels.
\end{proof}

\begin{lem}\label{lem:quadratic form isomorphism}
	Let $K$ be a quadratic number field of discriminant $D$ and $\mf{a} = ca \Z+ c\frac{b + \sqrt{D}}{2}\Z$ a fractional ideal of $K$. Let $A$ be a $\Z$-algebra and
	\[q_\mf{a}(x, y) = \frac{N(ca x + c\frac{b+ \sqrt{D}}{2} y)}{N(\mf{a})} = ax^2 + bxy + \frac{b^2 - D}{4a} y^2,\]
	the binary quadratic form associated with $\mf{a}$. Then $\mf{a} \otimes_\Z A \subset K \otimes A$ is a cyclic $\mc{O}_K \otimes A$ module if and only if there exist $x, y \in A$ such that $q_\mf{a}(x, y) \in A^\times$.
\end{lem}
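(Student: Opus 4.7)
The plan is to choose $\Z$-bases that reduce the entire statement to a $2 \times 2$ determinant computation. Take the $\Z$-basis $\{ca,\, c\omega\}$ of $\mf{a}$ (with $\omega = \frac{b+\sqrt{D}}{2}$) and the $\Z$-basis $\{1,\, \omega_0\}$ of $\mc{O}_K$ (with $\omega_0 = \frac{d+\sqrt{D}}{2}$, where $D \equiv d^2 \pmod 4$). Tensoring with $A$ turns these into $A$-bases of $\mf{a}\otimes A$ and $\mc{O}_K \otimes A$ as free $A$-modules of rank $2$. For a candidate generator $\alpha = cax + c\omega y \in \mf{a}\otimes A$, the question of whether $\alpha$ generates $\mf{a} \otimes A$ over $\mc{O}_K \otimes A$ is exactly the question of whether the $\mc{O}_K \otimes A$-linear map $\phi_\alpha \colon \mc{O}_K \otimes A \to \mf{a}\otimes A$, $r \mapsto r\alpha$, is surjective.

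Next I would compute the matrix of $\phi_\alpha$ as an $A$-linear map in the chosen bases. This reduces to pinning down the $2\times 2$ integer matrix $M_{\omega_0}$ of multiplication by $\omega_0$ on $\mf{a}$ in the basis $\{ca, c\omega\}$. Using that $b$ and $d$ share parity (both squares are congruent to $D$ mod $4$) and that $n := \frac{b^2-D}{4a}$ is an integer, a short direct expansion yields $\det \phi_\alpha = ax^2 + bxy + n y^2 = q_\mf{a}(x, y)$.

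With this determinant identity in hand, both implications follow. If $q_\mf{a}(x,y) \in A^\times$, then the matrix of $\phi_\alpha$ is invertible over $A$, so $\phi_\alpha$ is an $A$-linear isomorphism, in particular surjective, and $\alpha$ generates. Conversely, if $\mf{a}\otimes A$ is cyclic with some generator $\beta = cax + c\omega y$, then $\phi_\beta$ is a surjection between two free $A$-modules of rank $2$; any surjection $A^n \twoheadrightarrow A^n$ admits a right inverse matrix, forcing its determinant to be a unit, and hence $q_\mf{a}(x,y) = \det\phi_\beta \in A^\times$.

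The main obstacle is bookkeeping in the determinant computation: checking that all entries of $M_{\omega_0}$ (which a priori involve halves of $b \pm d$ and the quantity $(b^2-D)/(4a)$) are honestly integer-valued, and confirming that the resulting $2 \times 2$ determinant collapses precisely to the form $ax^2 + bxy + ny^2$. Conceptually, this determinant identity is the computational avatar of the classical bijection between proper ideal classes and binary quadratic form classes, which is presumably what allows the narrow class group to appear in the final count of $\Sha(\mc{C}/\Z)$.
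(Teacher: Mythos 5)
Your proposal is correct and follows essentially the same route as the paper: both arguments write a candidate generator as $\alpha\otimes x+\beta\otimes y$, express the multiplication map $\mc{O}_K\otimes A\to\mf{a}\otimes A$ as a $2\times 2$ matrix over $A$ in the natural bases, and identify its determinant with $q_\mf{a}(x,y)$, so that cyclicity is equivalent to this determinant being a unit. Your justification of the converse direction (a surjection of free rank-$2$ modules splits, so the matrix has a right inverse and its determinant is a unit) is a slightly more careful articulation of the step the paper states as "invertible if and only if its determinant is invertible," but the content is the same.
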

\begin{proof}
	Let $\alpha = ca, \beta = c\frac{b + \sqrt{D}}{2}$ be generators for $\mf{a}$. Then we can write an element $m \in \mf{a} \otimes A$ as $m = \alpha \otimes x + \beta \otimes y$. For the equality $\mf{a} \otimes A = m(\mc{O}_K \otimes A)$ to hold, there must exist some $d, d' \in \mc{O}_K \otimes_\Z A$ such that $m d = \alpha \otimes 1$ and $m d' = \beta \otimes 1$. An arbitrary $d$ is given by $1 \otimes x' + \delta \otimes y'$ and multiplying gives
	\[m d = \alpha \otimes \left(x x' + \frac{D - b}{2}xy' + \frac{D - b^2}{4a} yy'\right) + \beta \otimes \left(yx' + axy' + \frac{D+ b}{2}yy'\right).\]
	Therefore, such an element $m$ exists if and only the matrix
	\[\begin{pmatrix}
	x & \frac{D - b}{2}x + \frac{D - b^2}{4a}y\\ y & ax + \frac{D+ b}{2}y
	\end{pmatrix}\]
	is invertible in $M_2(A)$, which occurs if and only if its determinant is invertible. So there exists an $m$ satisfying $\mf{a} \otimes A = m(\mc{O}_K \otimes A)$ if and only if there exist $x, y \in A$ such that
	\[q_\mf{a}(x, y) = ax^2 + bxy + \frac{b^2 - D}{4a}y^2 = \left|\begin{matrix}
	x & \frac{D - b}{2}x + \frac{D - b^2}{4a}y\\ y & ax + \frac{D+ b}{2}y
	\end{matrix}\right| \in A^\times.\]
\end{proof}

\begin{thm}
	Let $K$ be a number field and $f(X, Y) = \sum_{i = 0}^d h_i X^{d - i}Y^i$ be a homogeneous form of degree $d$ with coefficients in $\mc{O}_K$. Suppose that $k \in (h_0, \dots, h_d) \subset \mc{O}_K$. Then, there is a finite algebraic extension $L/K$ and integers $x, y \in \mc{O}_L$ such that $f(x, y) = k$.
\end{thm}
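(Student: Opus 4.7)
The plan is a geometric one: consider the projective closure of the affine variety $V(f - k)$ and use properness. Set $C = V(f(X, Y) - kZ^d) \subset \mathbb{P}^2_{\mc{O}_K}$, which is proper over $\mathrm{Spec}(\mc{O}_K)$ since it is a closed subscheme of projective space. If $k = 0$ the result is trivial by taking $(x,y) = (0,0)$, so assume $k \neq 0$; then $f(X,Y) - kZ^d$ is a nonzero homogeneous polynomial of degree $d$, and the generic fiber $C_K \subset \mathbb{P}^2_K$ is a nonempty projective plane curve. The locus $V(Z) \cap C_K$ is zero-dimensional (cut out by $Z$ and $f$ together), so $C_K \setminus V(Z)$ is a dense open subset containing closed points; pick one such point $P$ with residue field $L = \kappa(P)$ a finite extension of $K$.

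By the valuative criterion of properness applied at each prime of $\mc{O}_L$, the $L$-point $P$ extends uniquely to an $\mc{O}_L$-section $\mathrm{Spec}(\mc{O}_L) \to C$. Concretely, this corresponds to a rank-one projective $\mc{O}_L$-module $\mc{M}$ with three generating global sections $x, y, z$ satisfying $f(x, y) = kz^d$. Passing to a further finite extension $L'$ where the class of $\mc{M}$ in the finite class group $\mathrm{Pic}(\mc{O}_L)$ becomes trivial (for instance its Hilbert class field), we can arrange $x, y, z \in \mc{O}_{L'}$ with $(x,y,z) = \mc{O}_{L'}$.

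The remaining task --- and the main obstacle --- is to arrange for $z$ to be a unit in $\mc{O}_{L'}$; once that is done, $(x/z, y/z) \in \mc{O}_{L'}^2$ solves $f = k$. This is where the hypothesis $k \in (h_0, \dots, h_d)$ is meant to intervene. If $z$ failed to be a unit at some prime $\mathfrak{P}$ of $\mc{O}_{L'}$, then the valuation identity $v_\mathfrak{P}(f(x,y)) = v_\mathfrak{P}(k) + d\, v_\mathfrak{P}(z) > v_\mathfrak{P}(k) \geq \min_i v_\mathfrak{P}(h_i)$ (the last inequality being exactly the hypothesis) would force a cancellation in the expansion $\sum h_i x^{d-i} y^i$ beyond the minimum valuation of its terms, constraining the reduction $(\bar x, \bar y)$ to lie on a specific proper subvariety of $\mathbb{A}^2_{\kappa(\mathfrak{P})}$ cut out by the reduced leading-term polynomial. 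Since $z$ is a fixed element of $\mc{O}_{L'}$ its divisorial support is a finite set of primes, so only finitely many primes present a genuine obstruction. I would dispose of these by selecting the generic closed point $P \in C_K$ carefully --- using weak approximation within $C_K(L_\mathfrak{p})$, or by further finite base changes that resolve each problem prime in turn --- so that the reductions of the $\mc{O}_{L'}$-section avoid the cancellation loci at every offending prime simultaneously. The technical heart of the argument is exactly this simultaneous avoidance, for which the finiteness of the support of $z$ is the crucial input that makes the approximation problem tractable.
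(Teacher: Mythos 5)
The paper does not actually prove this theorem --- it cites Birch for it --- so there is no internal argument to measure yours against; I can only judge the proposal on its own terms. Your setup is sound as far as it goes: the projective closure $C = V(f(X,Y) - kZ^d) \subset \mb{P}^2_{\mc{O}_K}$ is proper, a closed point of $C_K \setminus V(Z)$ gives an $L$-point, properness over the Dedekind domain $\mc{O}_L$ extends it to a section valued in an invertible module, and passing to the Hilbert class field lets you write the section as $(x:y:z)$ with $x,y,z \in \mc{O}_{L'}$ generating the unit ideal. But this is also exactly where the theorem stops being easy: the entire content is in forcing $z$ to be a unit, and that step is deferred rather than carried out.

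The gap is concrete. First, the set of primes $\mf{P}$ with $v_\mf{P}(z)>0$ depends on the point $P$ you chose; it is not contained in any finite set determined in advance. For a positive density of primes the reduction of $f$ has a zero on $\mb{P}^1$ of the residue field, so $C \cap V(Z)$ has residue-field points in the corresponding special fibres and nothing prevents your section from reducing into that locus there. Hence ``select $P$ so that the reductions avoid the cancellation loci at every offending prime'' is circular: the offending primes are only known after $P$ is fixed, and then it is too late. Second, the tools invoked to do the avoiding are not available: weak approximation on $C_K$ fails for general plane curves of degree $d$ (already for smooth cubics), and ``further finite base changes that resolve each problem prime in turn'' can introduce new problem primes, so termination is unaddressed. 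Third, the hypothesis $k \in (h_0,\dots,h_d)$ is never decisively used: at a bad prime it only tells you that $(\bar x,\bar y)$ lies on the (typically nonempty) zero locus of the reduced leading form, which is a condition to be avoided, not a contradiction. Since the statement is false without that hypothesis (e.g.\ $f = 2X^2+2Y^2$, $k=1$ would force $2 \in \mc{O}_L^\times$), a complete proof must make the hypothesis bite at precisely the step your argument postpones. As written this is a plausible plan of attack, not a proof; either supply the simultaneous-avoidance argument in full or cite Birch as the paper does.
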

\begin{proof}
	A proof is provided by Birch \cite{birch1985}.
\end{proof}

\begin{cor}\label{cor:binary quadratic form 1}
	Let $\mf{a}$ be a fractional ideal of quadratic number field $K$ and let $q_\mf{a}$ be the associated binary quadratic form. Then there exists a finite extension $L/\Q$ and integers $x, y \in \mc{O}_L$ such that $q_\mf{a}(x, y) = 1$.
\end{cor}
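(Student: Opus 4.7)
The plan is to invoke the preceding theorem of Birch directly on the form $q_\mf{a}(X,Y) = aX^2 + bXY + c'Y^2$, where $c' := (b^2 - D)/(4a) \in \Z$, with $k = 1$. Since the coefficients $a, b, c'$ lie in $\Z \subset \mc{O}_K$, this immediately produces the desired extension $L/\Q$ and integers $x, y \in \mc{O}_L$ with $q_\mf{a}(x,y) = 1$, provided one can verify the sole hypothesis that $1 \in (a, b, c') \cdot \mc{O}_K$. The entire proof therefore reduces to checking that $\gcd(a, b, c') = 1$ in $\Z$, i.e., that $q_\mf{a}$ is a primitive form; once that is done, writing $1$ as a $\Z$-linear combination of $a, b, c'$ places $1$ in the required ideal of $\mc{O}_K$ and Birch's theorem closes the argument.

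The main (and really only) obstacle is therefore this primitivity statement, which is classical but is not derived from earlier results in the excerpt. I would argue as follows: setting $g := \gcd(a, b, c')$, the identity $b^2 - 4ac' = D$ forces $g^2 \mid D$. Since $D$ is the discriminant of the quadratic field $K$, it is a fundamental discriminant, so either $D$ is squarefree with $D \equiv 1 \pmod 4$, in which case $g = 1$ at once, or $D = 4d$ with $d$ squarefree and $d \equiv 2, 3 \pmod 4$, in which case $g \in \{1, 2\}$. The case $g = 2$ can then be excluded because the rescaled form $(a/2)X^2 + (b/2)XY + (c'/2)Y^2$ would have integer coefficients with discriminant $D/4 = d \equiv 2, 3 \pmod 4$, which is impossible since discriminants of integer binary quadratic forms are always $\equiv 0, 1 \pmod 4$. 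Hence $g = 1$, which gives $1 \in (a, b, c')\mc{O}_K$, and Birch's theorem supplies the required $L$ and $x, y \in \mc{O}_L$.
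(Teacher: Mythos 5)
Your proof is correct and follows the same route as the paper: apply Birch's theorem with $k=1$ after checking that $q_{\mf{a}}$ is primitive. The paper simply asserts primitivity, whereas you supply the (correct) verification via $g^2 \mid D$ and the fact that $D$ is a fundamental discriminant.
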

\begin{proof}
	The binary quadratic form $q_\mf{a}$ is primitive and has coprime coefficients.
\end{proof}

\section{An Argument Using Twisted Forms}
The general idea of the proof presented will be to associate each cohomology class with a twisted Galois action of $\Gal(L/\Q)$ on $\mc{C}(\mc{O}_L)$, and then consider the points fixed by this new action. The set of fixed points will be a $\mc{O}_K$-module, which we will show to be isomorphic to fractional ideals of $K$. Thus, our problem will be reduced to classifying which ideals come from cohomology classes. First though, we prove that the kernel is trivial when considering rational points, as this fact will be used in the proof of the main theorem.

\begin{prop}\label{prop:Cohomology Q points trivial}
	\[\ker\left(H^1(\Q, \mc{C}(\clos{\Q})) \to \prod_p H^1(\Q_p, \mc{C}(\clos{\Q_p}))\right) = 1.\]
\end{prop}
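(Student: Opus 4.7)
The plan is to identify $\mc{C}$ with the norm-one torus $T^1 := \ker(N : \mathrm{Res}_{K/\Q}\mb{G}_m \to \mb{G}_m)$ of $K/\Q$, compute the relevant $H^1$'s using Hilbert 90 and Shapiro's lemma, and then invoke the Hasse Norm Theorem.

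First, the parametrization $(x, y) \mapsto (x + y\sqrt{D})/2$ furnishes a functorial isomorphism $\mc{C}(R) \xrightarrow{\sim} T^1(R)$ for any $\Q$-algebra $R$: the defining equation $x^2 - Dy^2 = 4$ is precisely the norm-one condition on $(x + y\sqrt{D})/2 \in R \otimes_\Q K$, and a direct expansion shows that the Pell-conic group law becomes multiplication under this map. In particular $\mc{C}(\clos{\Q}) \cong T^1(\clos{\Q})$ as $\Gal(\clos{\Q}/\Q)$-modules, and likewise over each completion.

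Next I would take Galois cohomology of the short exact sequence $1 \to T^1 \to \mathrm{Res}_{K/\Q}\mb{G}_m \to \mb{G}_m \to 1$. By Shapiro's lemma $H^i(\Q, \mathrm{Res}_{K/\Q}\mb{G}_m) \cong H^i(K, \mb{G}_m)$, which vanishes at $i = 1$ by Hilbert 90; the long exact sequence therefore collapses to
\[H^1(\Q, \mc{C}(\clos{\Q})) \cong \Q^\times / N_{K/\Q}(K^\times).\]
The identical argument over each $\Q_p$, combined with the decomposition $K \otimes_\Q \Q_p \cong \prod_{v \mid p} K_v$, yields
\[H^1(\Q_p, \mc{C}(\clos{\Q_p})) \cong \Q_p^\times \big/ \prod_{v \mid p} N_{K_v/\Q_p}(K_v^\times),\]
and the restriction map is induced by the inclusion $\Q^\times \hookrightarrow \Q_p^\times$.

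Under these identifications the kernel in the statement consists precisely of classes $[a] \in \Q^\times/N(K^\times)$ such that $a$ is a local norm from $K$ at every finite place of $\Q$. Since $K/\Q$ is a cyclic (quadratic) extension, the Hasse Norm Theorem says that an element which is a local norm at every place of $\Q$ is a global norm. The one bookkeeping point is that the product in the proposition omits the archimedean place, but this is harmless: the Hilbert-symbol product formula $\prod_v (a, D)_v = 1$ forces the archimedean local-norm condition as soon as all finite local-norm conditions hold. Hence the kernel is trivial. The main thing I expect to verify carefully is the Galois-equivariance of the isomorphism $\mc{C} \cong T^1$, but this is immediate from the fact that the parametrization is given by polynomials with coefficients in $\Q$ and so commutes with any Galois action on both sides.
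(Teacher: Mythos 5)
Your proof is correct, and in substance it follows the same path as the paper: identify $H^1(\Q,\mc{C}(\clos{\Q}))$ with $\Q^\times/N_{K/\Q}(K^\times)$ and the local groups with $\Q_p^\times$ modulo local norms, then quote the Hasse Norm Theorem for the cyclic extension $K/\Q$. The difference is in how the identification is obtained: the paper uses inflation--restriction to reduce to $H^1(K/\Q,\mc{C}(K))$ (killing the top of the tower via $H^1(K,\clos{K}^\times)=1$) and then computes the cocycles and coboundaries of the order-two Galois group by hand, whereas you take the long exact sequence of $1 \to T^1 \to \mathrm{Res}_{K/\Q}\mb{G}_m \to \mb{G}_m \to 1$ together with Shapiro's lemma and Hilbert 90. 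Both are standard and land in the same place; yours is more structural and avoids the explicit cocycle calculation, at the cost of having to note that the norm map is surjective on $\clos{\Q}$-points so that the sequence of Galois modules is exact. One point where your write-up is more careful than the paper's: the product in the statement runs only over the finite places, while the Hasse Norm Theorem requires the local norm condition at \emph{every} place, including the archimedean one. Your appeal to the product formula $\prod_v (a,D)_v = 1$ for the Hilbert symbol, which forces the condition at infinity once it holds at all finite $p$, closes this gap; the paper's proof passes over it silently.
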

\begin{proof}
	Note that there exists an isomorphism of $\Gal(\clos{K}/K)$ (resp. $\Gal(\clos{K_v}/K_v)$) modules between $\clos{K}^\times$ and $\mc{C}(\clos{K})$ (resp. $\clos{K_v}^\times$ and $\mc{C}(\clos{K_v})$) by $c \mapsto (c + c^{-1}, \frac{c - c^{-1}}{\sqrt{D}})$ and an inverse given by $(a, b) \mapsto \frac{a + b\sqrt{D}}{2}$. The inflation restriction sequence gives us an exact sequence
	\[\begin{tikzcd}
	1 \arrow[r] & H^1(K/\Q, \mc{C}(\clos{\Q})^{\Gal(\clos{K}/K)}) \arrow[r] & H^1(\Q, \mc{C}(\clos{\Q})) \arrow[r] & H^1(K, \mc{C}(\clos{K})).
	\end{tikzcd}.\]
	Hilbert's Theorem $90$ tells us that $H^1(K, \mc{C}(\clos{K})) \cong H^1(K, \clos{K}^\times) = 1$. We know $\mc{C}(\clos{Q})^{\Gal(\clos{K}/K)} = \mc{C}(K)$, and so we have $H^1(\Q, \mc{C}(\clos{\Q})) \cong H^1(K/\Q, \mc{C}(K))$ (resp. $H^1(\Q_p, \mc{C}(\clos{\Q_p})) \cong H^1(K_v/\Q_p, \mc{C}(K_v))$).
	
	The $1$-cocycles of $H^1(K/\Q, \mc{C}(K))$ are determined by the image of $\sigma: \sqrt{D} \mapsto -\sqrt{D}$. This image must satisfy $\sigma(x, y) + (x, y) = (2, 0)$ and identifying each point $(x, y) \in \mc{C}(K)$ with a unique element $c \in K^\times$, we have that $\sigma(x, y)$ is identified with $\frac{\sigma(x) + \sigma(y)\sqrt{D}}{2} = \sigma(\frac{x - y\sqrt{D}}{2})$. So cocycles are identified with $c \in K^\times$ satisfying $\sigma(c^{-1})c = 1$, meaning $c \in \Q^\times$. Under the same identification, the $1$-coboundaries become identified with elements of the form $\sigma(x, y) - (x, y) = \sigma(c^{-1})\cdot c^{-1} = N(c^{-1})$. In this way $H^1(K/\Q, \mc{C}(K)) \cong \Q^\times/N(K^\times)$, where $N(K^\times)$ is the set of elements of $\Q^\times$ that arise as norms from elements in $K^\times$. Thus,
	\[\ker\left(H^1(\Q, \mc{C}(\bar{\Q})) \to \prod_p H^1(\Q_p, \mc{C}(\bar{\Q_p}))\right) = \ker\left(\Q^\times/N(K^\times) \to \prod_p \Q_p^\times/N(K_v^\times)\right),\]
	and the latter is trivial by the Hasse Norm Theorem.
\end{proof}

\begin{defn}
	Let $K$ be a quadratic field of discriminant $D$. Let $N_1$ be an algebraic curve whose $A$ points are
	\[N_1(A) := (\mc{O}_K \otimes A)^\times_1 := \{x \in (\mc{O}_K \otimes A)^\times: N(x) = 1\},\]
	where the norm is induced form the norm $\mc{O}_K \to \Z$ and takes $\mc{O}_K \otimes A \to A$. This gives $N_1$ the structure of an algebraic group. To be explicit, we have $\mc{O}_K = \Z[\delta]$ for $\delta := \frac{D+ \sqrt{D}}{2}$ and hence $N_1: x^2 + Dxy + \frac{D^2 - D}{4}y^2 = 1$. We will sometimes write an $A$ point as $(x, y) \in N_1(A)$ to denote $1 \otimes x + \delta \otimes y \in (\mc{O}_K \otimes A)^\times_1$.
\end{defn}

\begin{prop}\label{prop:Curve Points are Norm 1 Units}
	Let $L/\Q$ be a number field. Then $\mc{C}(\mc{O}_L) \cong N_1(\mc{O}_L)$ as $\Gal(L/\Q)$-modules.
\end{prop}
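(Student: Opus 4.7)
The plan is to extend the Galois-equivariant isomorphism $\mc{C}(\clos{L}) \cong (K \otimes_\Q \clos{L})^\times_1$ from the proof of Proposition \ref{prop:Cohomology Q points trivial} down to the level of integer points. Writing an element of $\mc{O}_K \otimes_\Z \mc{O}_L$ in the basis $(1, \delta)$, the formula $(a, b) \mapsto (a + b\sqrt{D})/2$ translates into the candidate map
\[
\phi \colon \mc{C}(\mc{O}_L) \to N_1(\mc{O}_L), \qquad (a, b) \mapsto \bigl((a - Db)/2,\, b\bigr),
\]
with inverse
\[
\psi \colon N_1(\mc{O}_L) \to \mc{C}(\mc{O}_L), \qquad (\alpha, \beta) \mapsto (2\alpha + D\beta,\, \beta).
\]
The map $\psi$ is given by integer polynomials, and a direct computation yields $(2\alpha + D\beta)^2 - D\beta^2 = 4\bigl(\alpha^2 + D\alpha\beta + \tfrac{D^2 - D}{4}\beta^2\bigr) = 4$, so $\psi$ lands in $\mc{C}(\mc{O}_L)$.

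The one nontrivial point will be showing that $\phi$ lands in $N_1(\mc{O}_L)$, i.e.\ that $(a - Db)/2$ is actually in $\mc{O}_L$ despite the division by $2$. The plan is to argue by integral closedness: the element $z := (a + b\sqrt{D})/2 \in K \otimes_\Q L$ has trace $a \in \mc{O}_L$ and norm $(a^2 - Db^2)/4 = 1$, so it satisfies the monic polynomial $T^2 - aT + 1 \in \mc{O}_L[T]$ and is thus integral over $\mc{O}_L$. Since $\delta$ is integral over $\Z$, the difference $(a - Db)/2 = z - b\delta$ is also integral over $\mc{O}_L$. This difference lies in $L$, so integral closedness of $\mc{O}_L$ in $L$ forces $(a - Db)/2 \in \mc{O}_L$. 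The norm condition defining $N_1$ then follows at once from $z\,\conj{z} = 1$.

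With $\phi$ and $\psi$ both well-defined, the remaining verifications are formal. Direct substitution gives $\phi \circ \psi = \mathrm{id}$ and $\psi \circ \phi = \mathrm{id}$. Both maps are given by polynomials with $\Z$-coefficients, so they commute with the $\Gal(L/\Q)$-action on the entries drawn from $\mc{O}_L$. Finally, the group law on $\mc{C}$ is carried to multiplication in $\mc{O}_K \otimes \mc{O}_L$: the identity $(2, 0)$ corresponds to $1$, and the expansion $(a + b\sqrt{D})(c + d\sqrt{D})/4 = \bigl((ac + Dbd) + (ad + bc)\sqrt{D}\bigr)/4$ matches the given addition formula $(a, b) + (c, d) = ((ac + Dbd)/2,\, (ad + bc)/2)$ after applying $\phi$, so $\phi$ is also a group homomorphism.
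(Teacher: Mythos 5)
Your proof is correct and uses the same isomorphism $(a,b) \mapsto ((a - Db)/2,\, b)$ as the paper; the only point of divergence is the justification that $(a - Db)/2 \in \mc{O}_L$, which you obtain from the integrality of $z = (a + b\sqrt{D})/2$ over $\mc{O}_L$ (it satisfies $T^2 - aT + 1$) together with the integral closedness of $\mc{O}_L$ in $L$, whereas the paper argues with $2$-adic valuations starting from $4 \mid (a+Db)(a-Db)$. Both arguments are sound, and yours has the mild advantage of sidestepping any bookkeeping at the places above $2$.
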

\begin{proof}
	We map $(x, y) \in \mc{C}(\mc{O}_L) \mapsto (\frac{x - Dy}{2}, y) \in N_1(\mc{O}_L)$. We have that $\frac{x - Dy}{2} \in \mc{O}_L$ because $(x + Dy)(x - Dy) = 4 + (D - D^2)y^2$ and hence $4 \mid (x + Dy)(x - Dy)$. Letting $|\cdot |_2$ be the $2$-adic norm, we have that $|x + Dy|_2 |x - Dy|_2 \le \frac{1}{4}$ and since $x + Dy = x - Dy + 2Dy$, we have that $|x - Dy|_2 \le \frac{1}{2}$ meaning $x - Dy \in 2\mc{O}_L$. It is a straightforward matter to verify that this preserves group structure, respects $\Gal(L/\Q)$ action, and is an isomorphism. 
\end{proof}

\begin{cor}\label{cor:ShaC equals ShaN1}
	$\Sha(\mc{C}/\Z) \cong \Sha(N_1/\Z)$.
\end{cor}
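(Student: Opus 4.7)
The plan is to promote the Galois-equivariant isomorphism of Proposition \ref{prop:Curve Points are Norm 1 Units} from integer rings of number fields up to the full algebraic integers, then transport it through $H^1$ and take kernels.

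First I would observe that the mutually inverse maps $(x,y) \mapsto (\tfrac{x-Dy}{2}, y)$ and $(u,v) \mapsto (2u+Dv, v)$ are given by universal polynomial formulas with coefficients in $\Z$, so they are automatically functorial in the coefficient ring, provided the integrality of the image is established. Writing $\clos{\Z} = \varinjlim_L \mc{O}_L$ over finite extensions $L/\Q$, and analogously $\clos{\Z_p} = \varinjlim_{L_\lambda/\Q_p} \mc{O}_{L_\lambda}$, the integrality check carried out in Proposition \ref{prop:Curve Points are Norm 1 Units} passes through these colimits. This yields isomorphisms $\mc{C}(\clos{\Z}) \cong N_1(\clos{\Z})$ of $\Gal(\clos{\Q}/\Q)$-modules and $\mc{C}(\clos{\Z_p}) \cong N_1(\clos{\Z_p})$ of $\Gal(\clos{\Q_p}/\Q_p)$-modules for every prime $p$.

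Next, these Galois-module isomorphisms induce isomorphisms on the relevant $H^1$ groups. Because the polynomial formulas also commute with the base change $\clos{\Z} \to \clos{\Z_p}$, the localization maps appearing in the definition of $\Sha$ fit into a commutative square
\[\begin{tikzcd}
H^1(\Q, \mc{C}(\clos{\Z})) \arrow[r, "\sim"] \arrow[d] & H^1(\Q, N_1(\clos{\Z})) \arrow[d] \\
\prod_p H^1(\Q_p, \mc{C}(\clos{\Z_p})) \arrow[r, "\sim"] & \prod_p H^1(\Q_p, N_1(\clos{\Z_p})).
\end{tikzcd}\]
Comparing the kernels of the two vertical maps gives the desired isomorphism $\Sha(\mc{C}/\Z) \cong \Sha(N_1/\Z)$.

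I do not anticipate any serious obstacle: the content beyond Proposition \ref{prop:Curve Points are Norm 1 Units} is just the observation that the isomorphism produced there is given by universal polynomials over $\Z$, so compatibility with arbitrary $\Z$-algebra maps and with Galois actions is automatic. The one point that deserves care is verifying that the $2$-adic integrality argument used in Proposition \ref{prop:Curve Points are Norm 1 Units} genuinely persists after passage to $\clos{\Z}$ and $\clos{\Z_p}$; but since any point of $\mc{C}(\clos{\Z})$ (resp. $\mc{C}(\clos{\Z_p})$) lies in $\mc{C}(\mc{O}_L)$ for some finite $L$, this reduces immediately to the number field case already handled.
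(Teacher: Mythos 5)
Your proposal is correct and takes essentially the same route as the paper, which simply invokes Proposition \ref{prop:Curve Points are Norm 1 Units} together with the identification of $H^1(\Q, \mc{C}(\clos{\Z}))$ as a limit of $H^1(\Gal(L/\Q), \mc{C}(\mc{O}_L))$ over finite Galois extensions $L/\Q$. You additionally spell out the local half of the square and the compatibility with localization, which the paper leaves implicit; that extra care is harmless and arguably an improvement.
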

\begin{proof}
	This immediately follows from $H^1(\Q, \mc{C}(\clos{\Z})) \cong \varprojlim H^1(\Gal(L/\Q), \mc{C}(\mc{O}_L))$ over all finite extensions $L/\Q$.
\end{proof}

\begin{remark}
	We look at $\Sha(N_1/\Z)$ because $N_1$ is an algebraic group whose $A$ points are defined for all $\Z$-algebras $A$. The group structure on $\mc{C}$ is not well defined for arbitrary points. For example, if $A = \Z[\sqrt{2}] \otimes_\Z \Z[\sqrt{2}]$ and $D = -4$, then $(\sqrt{2} \otimes \sqrt{2}, 0), (0, 1) \in \mc{C}(A)$, but
	\[(\sqrt{2} \otimes \sqrt{2}, 0) + (0, 1) = \left(0, \frac{\sqrt{2}}{2} \otimes \sqrt{2}\right) \not \in \mc{C}(A).\]
\end{remark}

\begin{thm}\label{thm:Bijection between Sha and 2Cl+(K)}
	There exists a bijection between $\Sha(\mc{C}/\Z)$ and $(Cl^+(K))^2$.
\end{thm}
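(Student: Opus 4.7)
The plan is to first invoke Corollary \ref{cor:ShaC equals ShaN1} to replace $\Sha(\mc{C}/\Z)$ with $\Sha(N_1/\Z)$, and then to construct the desired bijection via the standard twisted-form dictionary attaching a fractional ideal of $K$ to each cohomology class. Concretely, I represent a class in $\Sha(N_1/\Z)$ by a cocycle $c \in Z^1(L/\Q, N_1(\mc{O}_L))$ for some finite Galois $L/\Q$ containing $K$. Viewing $c(\sigma)$ as a norm-one unit of $\mc{O}_K \otimes_\Z \mc{O}_L$, I twist the natural Galois action on the second tensor factor by $\sigma \ast_c m := c(\sigma)\sigma(m)$; the cocycle identity makes this an action, and the fixed-point set $M_c$ is an $\mc{O}_K$-submodule via left multiplication on the first factor. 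A descent argument (after base change to $\mc{O}_K$ the group $N_1$ becomes the multiplicative group, and Hilbert 90 trivializes $c$ there) shows that $M_c$ is locally free of rank one, hence identifies with a fractional ideal $\mf{a}_c \subset K$.

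Next I check that $[c] \mapsto [\mf{a}_c]$ gives a well-defined injection $\Phi: H^1(\Q, N_1(\clos{\Z})) \into Cl^+(K)$. Cohomologous cocycles differ by $u \in N_1(\mc{O}_L)$, and since $N(u) = 1$ multiplication by $u$ produces an $\mc{O}_K$-module isomorphism $M_c \cong M_{c'}$ preserving the narrow class. Injectivity is a global analogue of Proposition \ref{prop:Cohomology Q points trivial}: if $\mf{a}_c = \alpha\mc{O}_K$ for a totally positive $\alpha$, then untwisting the defining relation $\sigma(\alpha) = c(\sigma)^{-1}\alpha$ produces an element $u \in N_1(\mc{O}_L)$ trivializing $c$.

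To identify the image of $\Phi|_{\Sha(N_1/\Z)}$ as $(Cl^+(K))^2$, I apply Lemma \ref{lem:group scheme local kernel}; its hypothesis $N_1(A \times B) \cong N_1(A) \times N_1(B)$ is immediate. This lemma reformulates local triviality of $c$ at a finite place $v$ as triviality after base change from $\mc{O}_L$ to $\mc{O}_L \otimes_\Z \mc{O}_{\Q_v}$, which through the twisted-form construction says that $\mf{a}_c \otimes_\Z \Z_v$ is cyclic as a module over $\mc{O}_K \otimes \Z_v$. By Lemma \ref{lem:quadratic form isomorphism}, this is equivalent to the binary quadratic form $q_{\mf{a}_c}$ representing a unit of $\Z_v$. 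Requiring this at every finite place together with the positivity encoded in $Cl^+(K)$ is exactly the classical principal-genus condition on $[\mf{a}_c]$, and by Gauss's genus theorem the principal genus in $Cl^+(K)$ coincides with $(Cl^+(K))^2$. Conversely, given any $[\mf{a}] \in (Cl^+(K))^2$, Corollary \ref{cor:binary quadratic form 1} furnishes $x, y \in \mc{O}_L$ with $q_\mf{a}(x, y) = 1$ for some finite $L/\Q$, yielding an explicit cyclic generator of $\mf{a} \otimes \mc{O}_L$ and therefore an explicit cocycle $c$ with $\Phi([c]) = [\mf{a}]$.

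The main obstacle is ensuring the twisted-form dictionary is precise enough that local triviality of the cocycle matches the principal-genus condition rather than a weaker or stronger local principality condition; the asymmetry between the wide and narrow class groups, controlled by the norm-one constraint built into $N_1$, is the most delicate point to track.
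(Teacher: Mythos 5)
Your overall architecture (reduce to $N_1$, twist the Galois action by the cocycle, extract a fractional ideal from the fixed points, and use Lemmas \ref{lem:group scheme local kernel}, \ref{lem:quadratic form isomorphism} and Corollary \ref{cor:binary quadratic form 1} to go back and forth) matches the paper's. But the step where you identify the image of $\Sha$ inside $Cl^+(K)$ has a genuine gap. You translate local triviality of the cocycle at $v$ into ``$\mf{a}_c \otimes \Z_v$ is cyclic over $\mc{O}_K \otimes \Z_v$, equivalently $q_{\mf{a}_c}$ represents a \emph{unit} of $\Z_v$.'' That condition is vacuous: for any primitive form $ax^2+bxy+cy^2$ at least one of $q(1,0)$, $q(0,1)$, $q(1,\pm 1)$ is a $v$-adic unit (equivalently, $\mf{a}\otimes\Z_v$ is an invertible module over the semilocal ring $\mc{O}_K\otimes\Z_v$, hence always free of rank one). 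So your chain of equivalences would make every class in $Cl^+(K)$ locally admissible and the genus-theoretic conclusion would be $Cl^+(K)$ rather than $(Cl^+(K))^2$. What local triviality in $N_1$ (as opposed to $\mb{G}_m$) actually buys is a local generator $u_p$ of $\mf{a}\otimes\Z_p$ with $N(u_p)=1$, i.e.\ $q_{\mf{a}}$ must represent $1/N(\mf{a})$ exactly, not merely a unit; and one still needs to pin down $N(\mf{a})=1$ globally (the paper does this by patching the local identities $\mf{a}\otimes\mc{O}_L\otimes\Z_p = m(\mc{O}_K\otimes\mc{O}_L\otimes\Z_p)$ via Lemma \ref{lem:Locally equal implies globally equal} and then applying Lemma \ref{lem:quadratic form isomorphism} with $N(m)=1$). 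Your injectivity sketch has the same blind spot: a totally positive generator $\alpha$ of $\mf{a}_c$ gives $m=\alpha u$ with $N(u)=1/N(\alpha)$, which lies in $N_1(\mc{O}_L)$ only once you know $N(\alpha)=1$.

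Once the local condition is corrected to ``$q_{\mf{a}}$ represents $1$ in $\Z_p$ for all $p$'' (with $N(\mf{a})=1$), your appeal to the principal genus theorem is a legitimate alternative endgame, and it is essentially the interpretation the paper develops later via descent theory; but it imports Gauss's theorem as a black box. The paper's proof of this theorem avoids genus theory entirely: having shown $N(\mf{a})=1$, it factors $\mf{a}=\prod_p \mf{p}_1^{n_p}\mf{p}_2^{-n_p}$ over split primes and observes this is narrowly equivalent to $\prod_p \mf{p}_1^{2n_p}$, hence a square, and then constructs the inverse map explicitly from a norm-one representative of a square class. You should either supply the norm-one bookkeeping and the genus-theory citation carefully, or switch to the direct factorization argument.
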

\begin{proof}
	Since $H^1(\Q, \mc{C}(\clos{\Z}))$ is the direct limit over all finite Galois extensions $L/\Q$, it suffices to show the theorem for \textbf{}sufficiently large $L$. First, take a $1$-cocycle $c \in Z^1(L/\Q, \mc{C}(\mc{O}_L))$ that is a coboundary locally. The following diagram commutes.
	\[\begin{tikzcd} H^1(L/\Q, \mc{C}(\mc{O}_L)) \arrow[r] \arrow[d] & \dprod_p H^1(L_v/\Q_p, \mc{C}(\mc{O}_{L_v})) \arrow[d] \\ H^1(L/\Q, \mc{C}(L)) \arrow[r] & \dprod_p H^1(L_v/\Q_p, \mc{C}(L_v))\end{tikzcd}\]
	Because $c$ is in the kernel of the top morphism, the image of $c$ in $H^1(L/\Q, \mc{C}(L))$ is in the kernel of the bottom morphism. Proposition \ref{prop:Cohomology Q points trivial} implies that $c$ is a coboundary and hence $c(\sigma) = \sigma(m) - m$ for some $m \in \mc{C}(L)$. By Proposition \ref{prop:Curve Points are Norm 1 Units}, we can view $c(\sigma)$ as lying in $(\mc{O}_K \otimes \mc{O}_L)^\times_1$, the units of norm $1$. Define a new action of $\Gal(L/\Q)$ on $\mc{O}_K \otimes \mc{O}_L$ by $c_\sigma(x) = c(\sigma)\sigma(x)$. This is a group action because
	\[c_{\sigma \tau}(x) = c(\sigma \tau)\sigma(\tau(x)) = \sigma(c(\tau))c(\sigma)\sigma(\tau(x)) = c(\sigma)\sigma(c(\tau)\tau(x)) = c_\sigma(c_\tau(x)).\]
	In addition, there exists some $m \in (K \otimes \mc{O}_L)^\times_1$ such that $c(\sigma) = \frac{\sigma(m)}{m}$. Fix such an $m$. The fixed points of this action are $x \in \mc{O}_K \otimes \mc{O}_L$ such that $\sigma(mx) = mx$, which means $mx \in K \otimes \Z$. Thus, to each cocycle $c$, we can associate a submodule $\mf{a} \subset K$ defined as
	\[\mf{a} := \{x \in K: m^{-1}(x \otimes 1) \in \mc{O}_K \otimes \mc{O}_L\}.\]
	Since $m^{-1}$ is a finite sum of simple tensors, there exists some integer $n$ such that $nm^{-1} \in \mc{O}_K \otimes \mc{O}_L$. So $\mf{a}$ is a $\mc{O}_K$-submodule of $K$ such that $n\mf{a} \subset \mc{O}_K$ is an ideal. Therefore $\mf{a}$ is a fractional ideal of $K$. Note that our choice of $m$ is unique up to a unit $u \in (K \otimes \Z)^\times_1$, where either $u$ or $-u$ is totally positive since $[K:\Q] = 2$. Hence different choices of $m$ give an ideal up to multiplication by a totally real element, and so the same ideal class in $Cl^+(K)$. In addition, this map factors through $H^1(L/\Q, \mc{C}(\mc{O}_L))$. To see this, for any two cohomologous cocycles $[c] = [c']$, there exists some $u \in (\mc{O}_K \otimes \mc{O}_L)_1^\times$ such that $\sigma(u)c(\sigma) = c'(\sigma)u$ for all $\sigma$. Letting $c(\sigma) = \frac{\sigma(m)}{m}$ and $c'(\sigma) = \frac{\sigma(m')}{m'}$, we get that $um/m' \in (K \otimes \Z)_1^\times$ so there exists $v \in K_1^\times$ such that $um = m'v$. Let the ideal derived from $m$ be $\mf{a}$ and similarly from $m'$ be $\mf{a}'$. Then
	\[\mf{a}' = \{x \in K: m'^{-1}(x \otimes 1) \in \mc{O}_K \otimes \mc{O}_L\} = \{x \in K: vu^{-1}m^{-1}(x \otimes 1) \in \mc{O}_K \otimes \mc{O}_L\}\]
	\[= \{x \in K: vm^{-1}(x \otimes 1) \in \mc{O}_K \otimes \mc{O}_L\} = v^{-1}\mf{a},\]
	where we used the fact that $u \in (\mc{O}_K \otimes \mc{O}_L)_1^\times$. Hence $[\mf{a}'] = [\mf{a}] \in Cl^+(K)$. Therefore we get a map from $H^1(L/K, \mc{C}(\mc{O}_L))$ to $Cl^+(K)$ and restricting it to the cohomology classes that are locally trivial gives a map from $\Sha(\mc{C}/\Z)$ to $Cl^+(K)$.
	
	We need to show that this ideal class $\mf{a}$ belongs to $(Cl^+(K))^2$. We will prove that $N(\mf{a}) = 1$. Lemma \ref{lem:group scheme local kernel} applied to our situation tells us that locally there is an element $m_p \in (\mc{O}_K \otimes \mc{O}_L \otimes \Z_p)^\times_1$ such that $c(\sigma) \otimes 1 = \frac{\sigma(m_p)}{m_p}$ for all $\sigma \in \Gal(L/\Q)$, where $\sigma$ only acts $\mc{O}_L$. It follows that $\sigma((m\otimes 1)/m_p) = (m\otimes 1)/m_p$ for all $\sigma \in \Gal(L/\Q)$ and hence $u_p := (m\otimes 1)/m_p \in (K \otimes \Z \otimes \Z_p)^\times_1$. Note that we can identify $\mf{a}$ with $(K \otimes \Z) \cap m(\mc{O}_K \otimes \mc{O}_L)$, where the intersection is taken in $K \otimes \mc{O}_L$. Since $\Z_p$ is flat over $\Z$ and thus tensoring will preserve pullbacks,
	\begin{align*}
	\mf{a} \otimes \Z \otimes \Z_p &= [(K \otimes \Z) \cap m(\mc{O}_K \otimes \mc{O}_L)] \otimes \Z_p \\
	&= (K \otimes \Z \otimes \Z_p) \cap (m \otimes 1)(\mc{O}_K \otimes \mc{O}_L \otimes \Z_p)\\
	&= (K \otimes \Z \otimes \Z_p) \cap u_p(\mc{O}_K \otimes \mc{O}_L \otimes \Z_p) \\
	&= u_p(\mc{O}_K \otimes \Z \otimes \Z_p).
	\end{align*}
	Therefore we have that
	\[(\mf{a} \otimes \mc{O}_L) \otimes \Z_p = u_p(\mc{O}_K \otimes \mc{O}_L \otimes \Z_p) = (m(\mc{O}_K \otimes \mc{O}_L)) \otimes \Z_p.\]
	This identity holds independent of our choices because our choice of $m_p$ is up to a unit in $(\mc{O}_K \otimes \Z \otimes \Z_p)^\times_1$, so our choice of $u_p$ is unique up to a unit in $\mc{O}_K \otimes \Z \otimes \Z_p$. For any $\Z$-module $M$, the completion is $\hat{M}_p = M \otimes \Z_p$. Since $\mf{a} \otimes \mc{O}_L \otimes \Z_p = m(\mc{O}_K \otimes \mc{O}_L \otimes \Z_p)$ for all primes $p$, Lemma \ref{lem:Locally equal implies globally equal} gives us that $\mf{a}\otimes \mc{O}_L = m(\mc{O}_K \otimes \mc{O}_L)$. Let $\alpha, \beta$ denote a basis of $\mf{a}$ and let $m = \alpha \otimes x + \beta \otimes y$. By Lemma \ref{lem:quadratic form isomorphism}, we know that $q_\mf{a}(x, y) = N(m)/N(\mf{a}) \in \mc{O}_L^\times$ and since $N(m) = 1$, we have that $N(\mf{a}) \in \mc{O}_L^\times$. The norm is a positive rational number, and hence $N(\mf{a}) = 1$. Our field $K$ is quadratic and so norm $1$ ideals are of the form $\prod_p \mf{p}^{n_p}_1 \mf{p}^{-n_p}_2$, where the product is taken over all split primes. These ideals are in the same narrow ideal class as $\prod_p \mf{p}^{2n_p}$, which is a squared ideal. So $\mf{a}$ lies in $(Cl^+(K))^2$ and hence we have constructed a map from $\Sha(\mc{C}/\Z)$ to $(Cl^+(K))^2$.
	
	By the reasoning from before, in any narrow ideal class $[\mf{a}]$ in $(Cl^+(K))^2$, there exists a fractional ideal $\mf{a}$ of norm $1$. Choose such a fractional ideal. By Corollary $\ref{cor:binary quadratic form 1}$, there exist $x, y \in \mc{O}_L$ for some finite extension $L$ such that $q_\mf{a}(x, y) = 1$. Letting $m = \alpha \otimes x + \beta \otimes y$, Lemma \ref{lem:quadratic form isomorphism} tells us that $\mf{a}\otimes \mc{O}_L = m(\mc{O}_K \otimes \mc{O}_L)$ and so $N(m) = q_\mf{a}(x, y) \cdot N(\mf{a}) = 1$, meaning $m \in (K \otimes \mc{O}_L)^\times_1$. Associate $\mf{a}$ with the cocycle $c(\sigma) = \frac{\sigma(m)}{m}$. We have that
	\[\sigma(m)(\mc{O}_K \otimes \mc{O}_L) = \sigma(\mf{a} \otimes \mc{O}_L) = \mf{a} \otimes \mc{O}_L = m(\mc{O}_K \otimes \mc{O}_L),\]
	and combining this with $N(m) = 1$ gives $\frac{\sigma(m)}{m} \in (\mc{O}_K \otimes \mc{O}_L)^\times_1$. So $c$ does lie in $H^1(L/\Q, \mc{C}(\mc{O}_L))$.
	
	Next we show that $c$ is locally a coboundary. Since $N(\mf{a}) = 1$, if $p$ is inert or ramified, we know that $\mf{a} \otimes \Z_p = \mc{O}_K \otimes \Z_p$. In that case, let $u_p = 1$. If $(p) = \mf{p}_1 \mf{p}_2$ is split, then since $N(\mf{a}) = 1$, we know that $\mf{a} \otimes \Z_p = \mf{p}_1^{n_p} \times \mf{p}_2^{-n_p}$. Let $\pi$ be a uniformizer of $\Z_p$ so we get $\mf{a} \otimes \Z_p = (\pi^{n_p}, \pi^{-n_p}) \mc{O}_K \otimes \Z_p$. In this case, let $u_p = (\pi^{n_p}, \pi^{-n_p})$. Thus for every $p$, we have found $u_p \in (K \otimes \Z_p)^\times_1$ such that $\mf{a} \otimes \Z_p = u_p(\mc{O}_K \otimes \Z_p)$. For every $p$, we have
	\[u_p(\mc{O}_K \otimes \mc{O}_L \otimes \Z_p) = \mf{a}\otimes \mc{O}_L \otimes \Z_p = m(\mc{O}_K \otimes \mc{O}_L \otimes \Z_p)\]
	and hence $m = m_pu_p$ for some unit $m_p \in (\mc{O}_K \otimes \mc{O}_L \otimes \Z_p)^\times_1$. Since $u_p \in K \otimes \Z_p$, we know that $\frac{\sigma(m)}{m} = \frac{\sigma(m_p)}{m_p} \in (\mc{O}_K \otimes \mc{O}_L \otimes \Z_p)_1^\times$ and taking the projection to $\mc{O}_{L_v}$ gives us that $c$ is a coboundary locally. Finally, our choice of $\mf{a}$ is unique up to multiplication by a principal ideal generated by a totally real element. We chose $\mf{a}$ such that $N(\mf{a}) = 1$ so our choice of ideal is unique up to multiplication by a $u \in K^\times_1$, which does not affect our cohomology class. In addition, the choice of $m$ is unique up to multiplication by a $u \in (\mc{O}_K \otimes \mc{O}_L)^\times_1$, which gives a cohomologous class. Accordingly, this map is a well defined map from $(Cl^+(K))^2$ to $\Sha(\mc{C}/\Z)$.
	
	To complete the bijection, we show that these two maps are inverses of each other. Given a cocycle $c \in \Sha(\mc{C}/\Z)$, let $c(\sigma) = \frac{\sigma(m)}{m}$. From this we get an ideal $\mf{a}$ such that $\mf{a} \otimes \mc{O}_L = m(\mc{O}_K \otimes \mc{O}_L)$. Then from this ideal, since we showed that we can make an arbitrary choice of $m'$ such that $\mf{a} \otimes \mc{O}_L = m'(\mc{O}_K \otimes \mc{O}_L)$, we choose this particular $m' = m$ and get the same cocycle and cohomology class back. For the reverse direction, given such an ideal $\mf{a}$ with $N(\mf{a}) = 1$, let $\mf{a} \otimes \mc{O}_L = m(\mc{O}_K \otimes \mc{O}_L)$, giving us the cocycle $c(\sigma) = \frac{\sigma(m)}{m}$. Going back gives the ideal
	\[(K \otimes \Z) \cap m(\mc{O}_K \otimes \mc{O}_L) = (K \otimes \Z) \cap (\mf{a} \otimes \mc{O}_L) = \mf{a},\]
	completing the proof of the bijection.
\end{proof}

\begin{remark}
	By using the same strategy as above, one can also prove the classical result that
	\[\#\ker\left[H^1(K, \mc{O}_{\clos{K}}^\times) \to \prod_v H^1(K_v, \mc{O}_{\clos{K_v}}^\times)\right] = \#Cl(K).\]
	To do so, note that $\mc{O}_{\clos{K}}^\times$ is just the $\mc{O}_{\clos{K}}$ points of $\mb{G}_m$ and then using the same proof strategy as above, show that there exists a bijection between $\Sha(\mb{G}_m/\Z)$ and $Cl(K)$. The analog to Lemma \ref{lem:quadratic form isomorphism} is the fact that for large enough extensions, all ideals of $K$ become principal.
\end{remark}

\section{Descent Theory}
Now that we have proven Theorem \ref{thm:Bijection between Sha and 2Cl+(K)}, we aim to provide a greater geometrical interpretation for what $\Sha(\mc{C}/\Z)$ is classifying. In order to do so, we must first introduce the notion of faithfully flat descent.

\begin{defn}
	Let $R \to S$ be a faithfully flat ring extension. If $M$ is an $S$-module, then $M \otimes_R S$ is an $S \otimes_RS$-module in two ways: For $a \otimes b \in M \otimes_RS$ and $s \otimes t \in S \otimes_RS$, first by $(s \otimes t)(a \otimes b) = as \otimes bt$ and by $(s \otimes t)(a \otimes b) = at \otimes bs$. For a map $\theta: M \otimes_R S \to M \otimes_R S$, create three twistings on $M \otimes S \otimes S$, denoted by $\theta^0, \theta^1, \theta^2$. If $\theta(m \otimes a) = \sum m_i \otimes a_i$, then
	\begin{align*}
	\theta^0(m \otimes u \otimes a) &= \sum m_i \otimes u \otimes a_i,\\
	\theta^1(m \otimes u \otimes a) &= \sum m_i \otimes a_i \otimes u,\\
	\theta^2(m \otimes a \otimes u) &= \sum m_i \otimes a_i \otimes u.
	\end{align*}
	\textbf{Descent data} on $M$ consist of bijections $\theta: M \otimes S \to M \otimes S$ which are isomorphisms from one $(S \otimes S)$ structure to the other, and satisfy $\theta^1 = \theta^0 \theta^2$.
\end{defn}

\begin{remark}
	If $N$ is an $R$-module and $M = N \otimes S$, then the bijection $\theta: N \otimes S \otimes S \to N \otimes S \otimes S$ by $\theta(n \otimes a \otimes b) = n \otimes b \otimes a$ is descent data on $M \otimes S$. The twistings are $\theta^0(n \otimes a \otimes b \otimes c) = n \otimes c \otimes b \otimes a$, $\theta^1(n \otimes a \otimes b \otimes c) = n \otimes c \otimes a \otimes b$, and $\theta^2(n \otimes a \otimes b \otimes c) = n \otimes b \otimes a \otimes c$.
\end{remark}

\begin{thm}
	$R$-modules are naturally equivalent to $S$-modules with descent data
\end{thm}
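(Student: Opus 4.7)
The plan is to construct quasi-inverse functors in both directions and verify that the unit and counit of the resulting adjunction are isomorphisms. The functor $F$ from $R$-modules to $S$-modules with descent data sends $N$ to the pair $(N \otimes_R S,\, \theta_{\mathrm{can}})$, where $\theta_{\mathrm{can}}$ is the swap datum constructed in the preceding remark; the cocycle identity $\theta^1 = \theta^0 \theta^2$ follows directly from the explicit description of the twistings given there. In the reverse direction, the functor $G$ sends $(M, \theta)$ to its module of descent-invariants
\[ M^\theta := \bigl\{\, m \in M : \theta(m \otimes 1) = m \otimes 1 \,\bigr\}, \]
where the two sides of the equation are compared in $M \otimes_R S$ viewed under the two distinct $(S \otimes_R S)$-module structures. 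It is routine to verify that $M^\theta$ is an $R$-submodule of $M$ and that $G$ is functorial for morphisms of descent data.

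The central tool in both directions is the fact that faithful flatness of $R \to S$ yields, for every $R$-module $P$, an exact Amitsur sequence
\[ 0 \longrightarrow P \longrightarrow P \otimes_R S \rightrightarrows P \otimes_R S \otimes_R S, \]
whose parallel arrows are given by $p \otimes s \mapsto p \otimes s \otimes 1$ and $p \otimes s \mapsto p \otimes 1 \otimes s$. For the easier direction $G \circ F \cong \mathrm{id}$, applied to $N$ with the canonical swap, the defining equation for $M^\theta$ specializes, for $m = \sum n_i \otimes a_i$, to the identity $\sum n_i \otimes a_i \otimes 1 = \sum n_i \otimes 1 \otimes a_i$ in $N \otimes_R S \otimes_R S$. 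By the Amitsur sequence for $P = N$, the equalizer picks out precisely $N \otimes_R R = N$, so the unit $N \to (N \otimes_R S)^{\theta_{\mathrm{can}}}$ is an isomorphism.

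The main obstacle is the other direction $F \circ G \cong \mathrm{id}$: I need to show that the natural comparison map $M^\theta \otimes_R S \to M$ is an isomorphism of $S$-modules compatibly with the descent datum. The strategy is to realize $M^\theta$ as the equalizer of two $R$-linear maps $M \rightrightarrows M \otimes_R S$, one being the canonical $m \mapsto m \otimes 1$ and the other constructed from $\theta$, then tensor this equalizer diagram on the right by $S$ over $R$. Faithful flatness preserves equalizers, so the result is an equalizer sequence for $M^\theta \otimes_R S$ inside $M \otimes_R S$. The cocycle identity $\theta^1 = \theta^0 \theta^2$ is precisely the ingredient needed to produce an isomorphism between this tensored diagram and the Amitsur complex for $M$ regarded as an $R$-module, whose equalizer is $M$ itself; chasing the identifications then gives $M^\theta \otimes_R S \cong M$ and shows that $\theta_{\mathrm{can}}$ on the left transports to $\theta$ on the right. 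Naturality in morphisms is straightforward, completing the equivalence of categories.
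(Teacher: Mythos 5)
Your construction is exactly the one the paper uses: the same pair of quasi-inverse functors $N \mapsto (N\otimes_R S,\ \theta_{\mathrm{can}})$ and $(M,\theta)\mapsto \{m : \theta(m\otimes 1)=m\otimes 1\}$, and your Amitsur-complex verification is the standard argument that the paper simply delegates to Waterhouse, Section 17.2. So the proposal is correct and takes essentially the same approach, merely filling in the details the paper cites rather than proves.
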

\begin{proof}
	Given an $S$-module $M$ with descent data $\theta$, we identify it with the $R$-module $N = \{m \in M: \theta(m \otimes 1) = m \otimes 1\}$ and map an $R$-module $N$ to the $S$-module $N \otimes S$ along with the descent data $\theta(n \otimes a \otimes b) = n \otimes b \otimes a$. The proof that this defines a bijective correspondence is in Waterhouse Section $17.2$ \cite{waterhouse}.
\end{proof}

\begin{remark}
	If $M$ has a bilinear map and descent data preserving this map, then this bilinear map comes from the associated $R$-module.
\end{remark}

\begin{defn}
	Suppose $N$ is an $R$-module with some algebraic structure. An \textbf{$S/R$ form of $N$}, or \textit{a twisted form split by $S$}, is another $R$-module with the same type of structure that becomes isomorphic when tensored with $S$.
\end{defn}

\begin{defn}
	Let $N$ be an $R$-module and $\varphi$ an automorphism of $N \otimes S \otimes S$. If $\varphi(n \otimes a \otimes b) = \sum n_i \otimes a_i \otimes b_i$, then three twistings of $\varphi$ are $d^0\varphi, d^1\varphi, d^2\varphi\in Aut(N \otimes S \otimes S \otimes S)$ by
	\begin{align*}
	d^0\varphi(n \otimes u \otimes a \otimes b) &= \sum n_i \otimes u \otimes a_i \otimes b_i,\\
	d^1\varphi(n \otimes a \otimes u \otimes b) &= \sum n_i \otimes a_i \otimes u \otimes b_i,\\
	d^2\varphi(n \otimes a \otimes b \otimes u) &= \sum n_i \otimes a_i \otimes b_i \otimes u.
	\end{align*}
	For an $S$ automorphism $\lambda$ of $N \otimes S$, suppose that $\lambda(n \otimes a) = \sum n_i \otimes a_i$. Define $d^0\lambda, d^1\lambda \in Aut(N \otimes S \otimes S)$ in a similar manner.
\end{defn}

\begin{cor}\label{cor:Descent Theory}
	The $S/R$ forms of $N$ are naturally equivalent to automorphisms $\varphi$ of $N \otimes S \otimes S$ preserving the algebraic structure and satisfying $(d^0\varphi)(d^2\varphi) = d^1\varphi$, modulo the equivalence relation $\varphi \sim \varphi'$ if there exists some $\lambda \in Aut(N \otimes S)$ such that $\varphi' = (d^0\lambda)\varphi(d^1\lambda)^{-1}$.
\end{cor}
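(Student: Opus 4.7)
The plan is to derive this corollary directly from the preceding theorem by parameterizing descent data on $N \otimes_R S$ as a twist of the canonical descent data, and then translating both the cocycle identity and the equivalence of pairs into cosimplicial form. By that theorem, an $S/R$ form of $N$ corresponds (up to isomorphism) to a pair $(M, \theta)$ consisting of an $S$-module $M$ with the given algebraic structure together with descent data $\theta$ compatible with that structure. Since every $S/R$ form becomes isomorphic to $N \otimes_R S$ after extension of scalars, I may fix such a trivialization $M \cong N \otimes_R S$ and thereby reduce to classifying descent data on $N \otimes_R S$ modulo conjugation by automorphisms of $N \otimes_R S$.

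For the parameterization step, I would compare an arbitrary descent datum $\theta$ with the canonical one $\theta_0(n \otimes a \otimes b) = n \otimes b \otimes a$ coming from $N$ itself. Both $\theta$ and $\theta_0$ are isomorphisms between the two $(S \otimes_R S)$-module structures on $N \otimes_R S \otimes_R S$, so they differ by an automorphism, which I may take to be $\varphi := \theta_0^{-1} \theta$; this $\varphi$ is an automorphism of $N \otimes_R S \otimes_R S$ preserving the algebraic structure, uniquely determined by $\theta$. The cocycle identity $\theta^1 = \theta^0 \theta^2$ should then translate, after expressing each $\theta^i$ in terms of $\theta_0^i$ and an appropriate twisting of $\varphi$, and using the fact that $\theta_0$ itself satisfies the cocycle identity $\theta_0^1 = \theta_0^0 \theta_0^2$ (verified directly in the Remark), into the cosimplicial cocycle identity $(d^0\varphi)(d^2\varphi) = d^1\varphi$.

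Finally, two trivializations of the same $S/R$ form differ by an automorphism $\lambda \in \mathrm{Aut}(N \otimes_R S)$ preserving the algebraic structure, and the corresponding descent data are conjugate under the two evident extensions of $\lambda$ to $N \otimes_R S \otimes_R S$; this translates, in the $\varphi$-picture, into the coboundary relation $\varphi' = (d^0\lambda)\varphi(d^1\lambda)^{-1}$. The main obstacle will be the cosimplicial bookkeeping underlying these two translations: the $\theta^i$ are defined by regarding $\theta$ as a map $M \otimes_R S \to M \otimes_R S$ with $M = N \otimes_R S$, while the $d^i\varphi$ act directly on $N \otimes_R S \otimes_R S$, so correctly matching them requires careful tracking of which tensor slot plays which role, and in particular of which $(S \otimes_R S)$-module structure each map is linear with respect to. Once those identifications are firmly in hand, the corollary follows formally.
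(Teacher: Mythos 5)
Your proposal is correct and follows essentially the same route as the paper: the paper also passes between $\varphi$ and descent data via the canonical swap $\theta_0(n\otimes a\otimes b)=n\otimes b\otimes a$ (associating $\varphi$ with $\theta_0\circ\varphi$, i.e.\ your $\varphi=\theta_0^{-1}\theta$), and defers the cosimplicial bookkeeping you outline to Waterhouse, Section 17.5. Your sketch simply makes explicit the verification the paper cites.
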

\begin{proof}
	Given an automorphism $\varphi$, we associate it with descent data by taking $\theta \circ \varphi$, where $\theta(n \otimes a \otimes b) = n \otimes b \otimes a$. The details are shown in Waterhouse Section $17.5$ \cite{waterhouse}.
\end{proof}

\section{Application of Descent Theory}
\begin{prop}
	Let $L/\Q$ be a finite Galois extension. The map $\mc{O}_L \otimes_\Z \mc{O}_L \to \dprod_{\sigma \in \Gal(L/\Q)} \mc{O}_L$ is injective.
\end{prop}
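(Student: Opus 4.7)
The plan is to realize the given map as the restriction of the classical isomorphism $L \otimes_\Q L \xrightarrow{\sim} \prod_{\sigma \in \Gal(L/\Q)} L$, and then use torsion-freeness to pass from the rational version down to the integral version.

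First I would make the map explicit: for $a, b \in \mc{O}_L$ and $\sigma \in \Gal(L/\Q)$, the $\sigma$-component is $a \otimes b \mapsto a\,\sigma(b)$, which lands in $\mc{O}_L$ because $\sigma$ preserves integrality. Next, I would invoke the standard fact from Galois theory that for a finite Galois extension $L/\Q$, the $\Q$-algebra map
\[L \otimes_\Q L \longrightarrow \prod_{\sigma \in \Gal(L/\Q)} L, \qquad a \otimes b \longmapsto \bigl(a\,\sigma(b)\bigr)_\sigma,\]
is an isomorphism (both sides are finite-dimensional $\Q$-vector spaces of the same dimension $[L:\Q]^2$, and surjectivity follows from linear independence of characters or by comparing idempotents). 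In particular, this $\Q$-algebra map is injective.

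Now I would pass to the integral statement. Since $\mc{O}_L$ is a finitely generated torsion-free $\Z$-module it is free, so $\mc{O}_L \otimes_\Z \mc{O}_L$ is a free, hence torsion-free, $\Z$-module. Therefore the natural map
\[\mc{O}_L \otimes_\Z \mc{O}_L \longrightarrow (\mc{O}_L \otimes_\Z \mc{O}_L) \otimes_\Z \Q \;\cong\; L \otimes_\Q L\]
is injective. Composing with the Galois isomorphism above yields an injection of $\mc{O}_L \otimes_\Z \mc{O}_L$ into $\prod_\sigma L$ sending $a \otimes b$ to $(a\,\sigma(b))_\sigma$; by the integrality observation this factors through the subring $\prod_\sigma \mc{O}_L$, giving the desired injectivity.

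The only subtle point (and the main obstacle if one is not careful) is the compatibility of the two descriptions of the map and the need to invoke torsion-freeness: a priori one only knows that $\mc{O}_L \otimes_\Z \mc{O}_L \to \prod_\sigma \mc{O}_L$ is injective after inverting primes dividing the discriminant (where $\mc{O}_L \otimes_\Z \mc{O}_L$ is étale), so it is crucial to use that $\mc{O}_L \otimes_\Z \mc{O}_L$ is $\Z$-free in order to reduce injectivity over $\Z$ to injectivity over $\Q$.
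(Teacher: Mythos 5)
Your proposal is correct and follows essentially the same route as the paper: both identify the map with the restriction of the standard isomorphism $L \otimes_\Q L \cong \prod_\sigma L$ and deduce injectivity of the integral map from the injectivity of $\mc{O}_L \otimes_\Z \mc{O}_L \to L \otimes_\Q L$, which holds because $\mc{O}_L$ is $\Z$-free. The only cosmetic difference is which tensor factor carries the Galois twist ($(a\,\sigma(b))_\sigma$ versus the paper's $(\sigma(a)b)_\sigma$), which is immaterial.
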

\begin{proof}
	By the primitive element theorem and the fact that $L/\Q$ is Galois, we have that $L \otimes_{\Q} L \cong \prod L$ through the map $a \otimes b \mapsto (\sigma(a)b)_\sigma$. Furthermore, we have the commutative square
	\[\begin{tikzcd} \mc{O}_{L} \otimes_{\Z} \mc{O}_{L} \arrow[r] \arrow[d] & \dprod \mc{O}_{L} \arrow[d]\\
	L \otimes_{\Q} L \arrow[r] & \dprod L\end{tikzcd}\]
	The bottom morphism is an isomorphism and the left morphism is injective because $\mc{O}_{L}$ is free over $\Z$ and a basis for $\mc{O}_{L}$ over $\Z$ is a basis for $L$ over $\Q$. Thus, the top morphism must be injective.
\end{proof}

\begin{remark}
	Each element in $Z^1(L/\Q, N_1(\mc{O}_L))$ is a function and can be viewed as lying in $\dprod_{\Gal(L/\Q)} N_1(\mc{O}_L) = N_1\left(\dprod_{\Gal(L/\Q)} \mc{O}_L\right)$, where the $\sigma$ component of the product is the value of the function at $\sigma$.
\end{remark}
\begin{lem}\label{lem:coboundaries lie in descent data}
	For every prime $p$ and place $v$ of $L$ lying above $p$, we can view $B^1(L/\Q, N_1(\mc{O}_L \otimes \Z_p)) \subset N_1(\mc{O}_L \otimes \mc{O}_L \otimes \Z_p)$.
\end{lem}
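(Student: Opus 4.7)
The plan is to extend the injection of the previous proposition by flat base change and then exhibit, for each coboundary, an explicit preimage in $N_1(\mc{O}_L \otimes \mc{O}_L \otimes \Z_p)$.

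First I would tensor the injection $\mc{O}_L \otimes_\Z \mc{O}_L \hookrightarrow \prod_{\sigma \in \Gal(L/\Q)} \mc{O}_L$ with the flat $\Z$-module $\mc{O}_K \otimes_\Z \Z_p$ to obtain
\[
\mc{O}_K \otimes \mc{O}_L \otimes \mc{O}_L \otimes \Z_p \;\hookrightarrow\; \prod_\sigma \bigl(\mc{O}_K \otimes \mc{O}_L \otimes \Z_p\bigr),
\]
given explicitly by $a \otimes b \otimes b' \otimes c \mapsto (a \otimes \sigma(b) b' \otimes c)_\sigma$. Passing to norm-$1$ units produces the desired inclusion
\[
N_1(\mc{O}_L \otimes \mc{O}_L \otimes \Z_p) \;\hookrightarrow\; \prod_\sigma N_1(\mc{O}_L \otimes \Z_p),
\]
and by the preceding remark, the right-hand side is already the ambient group in which every cocycle (in particular every coboundary) lives. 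So the claim reduces to showing that each coboundary, viewed as a tuple in this product, actually comes from an element of $N_1(\mc{O}_L \otimes \mc{O}_L \otimes \Z_p)$.

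For a coboundary $c(\sigma) = \sigma(m) m^{-1}$ with $m \in N_1(\mc{O}_L \otimes \Z_p)$, I would build an explicit preimage via the two natural ring maps
\[
\iota_1,\iota_2 \colon \mc{O}_K \otimes \mc{O}_L \otimes \Z_p \longrightarrow \mc{O}_K \otimes \mc{O}_L \otimes \mc{O}_L \otimes \Z_p
\]
that insert $1$ into the second and first $\mc{O}_L$-factor, respectively. Both are ring homomorphisms compatible with the norm, so $\iota_1(m)$ and $\iota_2(m)$ lie in $N_1(\mc{O}_L \otimes \mc{O}_L \otimes \Z_p)$, and hence so does $\tilde m := \iota_1(m) \iota_2(m)^{-1}$. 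Unwinding the explicit formula from the first step, $\iota_1(m)$ corresponds to the tuple $(\sigma(m))_\sigma$ while $\iota_2(m)$ corresponds to the constant tuple $(m)_\sigma$, so $\tilde m$ maps to $(\sigma(m) m^{-1})_\sigma = (c(\sigma))_\sigma$, exhibiting $c$ as an element of $N_1(\mc{O}_L \otimes \mc{O}_L \otimes \Z_p)$.

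The main obstacle is largely notational: keeping the two $\mc{O}_L$-factors distinct and tracking which one the Galois action operates on, so that the images of $\iota_1$ and $\iota_2$ correctly produce $(\sigma(m))_\sigma$ versus $(m)_\sigma$. There is also a small but necessary remark that $\iota_2(m)$ is genuinely invertible in the four-fold tensor product, which follows because $\iota_2$ is a ring map and hence sends the unit $m$ to a unit. Once this bookkeeping is set up, the remaining verifications are immediate from the explicit formula.
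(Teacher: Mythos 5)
Your proof is correct and follows essentially the same route as the paper: both exhibit the coboundary $\sigma(m)m^{-1}$ as the image of an explicit element of $N_1(\mc{O}_L \otimes \mc{O}_L \otimes \Z_p)$ built from the two insertions of $\mc{O}_L \otimes \Z_p$ into $\mc{O}_L \otimes \mc{O}_L \otimes \Z_p$. The paper carries this out by an explicit coordinate computation in the basis $1, \delta$ of $\mc{O}_K$, whereas you package it more cleanly as $\iota_1(m)\iota_2(m)^{-1}$ using that the insertions are norm-compatible ring maps, and you also make explicit the injectivity point (flat base change of the preceding proposition) that the paper leaves implicit.
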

\begin{proof}
	Let $f \in B^1(L/\Q, N_1(\mc{O}_L \otimes \Z_p))$, then there exists $(x, y) \in N_1(\mc{O}_L \otimes \Z_p)$ such that $f = (\sigma(x), \sigma(y))_\sigma - (x, y)_\sigma = (1 \otimes \sigma(x) + \frac{D+ \sqrt{D}}{2} \otimes \sigma(y))_\sigma \cdot (1 \otimes x + \frac{D - \sqrt{D}}{2} \otimes y)_\sigma$. Letting $\delta = \frac{D+ \sqrt{D}}{2}$ and using the fact that $\delta^2 = D\delta + \frac{D - D^2}{4}$, we have that
	\[f = 1 \otimes \left(\sigma(x)(x + Dy) - \frac{D - D^2}{4}\sigma(y)y\right)_\sigma + \delta \otimes \left(\sigma(y)(x - Dy) - \sigma(x)y\right)_\sigma.\]
	Let $\iota_0: \mc{O}_L \otimes \Z_p \to \mc{O}_L \otimes \mc{O}_L \otimes \Z_p$ be the map $a \otimes b \mapsto a \otimes 1 \otimes b$ and $\iota_1$ be the map $a \otimes b \mapsto 1 \otimes a \otimes b$. Then $f$ is
	\[\left(\iota_0(x) \cdot \iota_1(x + Dy) - \iota_0\left(\frac{D - D^2}{4} y\right) \cdot \iota_1(y), \iota_0(y) \cdot \iota_1(x - Dy) - \iota_0(x) \cdot \iota_1(y)\right),\]
	which is an element of $N_1(\mc{O}_{L} \otimes \mc{O}_L \otimes \Z_p)$.
\end{proof}

\begin{prop}\label{prop:Kernel is Automorphism}
	If
	\[f \in \ker\left(Z^1(L/\Q, N_1(\mc{O}_L)) \to \prod_p H^1(L_v/\Q_p, N_1(\mc{O}_{L_v}))\right),\]
	then in fact $f \in N_1(\mc{O}_L \otimes_\Z \mc{O}_L)$.
\end{prop}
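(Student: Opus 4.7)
The approach is to chain three earlier results: first, use Lemma \ref{lem:group scheme local kernel} to convert the hypothesis ``$f$ is locally a coboundary'' into ``$f$ becomes a coboundary after base change along $\Z \to \Z_p$''; second, apply Lemma \ref{lem:coboundaries lie in descent data} to recognize each such local coboundary as an element of $N_1(\mc{O}_L \otimes_\Z \mc{O}_L \otimes_\Z \Z_p)$ sitting inside $N_1(\prod_\sigma \mc{O}_L \otimes \Z_p)$; and third, invoke Lemma \ref{lem:Locally equal implies globally equal} to descend this membership from every $\Z_p$ back to $\Z$, yielding $f \in N_1(\mc{O}_L \otimes_\Z \mc{O}_L)$.

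To execute the first two steps, I would verify that Lemma \ref{lem:group scheme local kernel} applies to $G = N_1$ with $K = \Q$: since $\mc{O}_K$ is $\Z$-free of rank $2$, one has $\mc{O}_K \otimes (A \times B) \cong (\mc{O}_K \otimes A) \times (\mc{O}_K \otimes B)$, and both taking units and imposing $N = 1$ respect products, so $N_1(A \times B) \cong N_1(A) \times N_1(B)$. The lemma then gives, for each prime $p$, that the image of $f$ in $H^1(L/\Q, N_1(\mc{O}_L \otimes \Z_p))$ vanishes, i.e.\ $f$ is a coboundary valued in $N_1(\mc{O}_L \otimes \Z_p)$. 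Lemma \ref{lem:coboundaries lie in descent data} then expresses such a coboundary as a sum of the specific simple tensors built from $\iota_0(\cdot) \cdot \iota_1(\cdot)$, exhibiting $f$ as an element of $N_1(\mc{O}_L \otimes_\Z \mc{O}_L \otimes_\Z \Z_p)$ inside $N_1(\prod_\sigma \mc{O}_L \otimes \Z_p)$ for every prime $p$.

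For the descent step, I would take $R = \Z$ and ambient module $M = \prod_\sigma \mc{O}_L$; writing $f = (f_x, f_y)$ with $f_x, f_y \in M$, I set $N = \mc{O}_L \otimes_\Z \mc{O}_L$ (embedded in $M$ via the injection from the first proposition of Section~5) and $N' = N + \Z f_x$. Both are finitely generated $\Z$-submodules of $M$; the maximal ideals of $\Z$ are the $(p)$'s, and for finitely generated modules $\hat{N}_{(p)} = N \otimes \Z_p$. The previous paragraph shows $f_x \in N \otimes \Z_p$ for every $p$, so $\hat{N}_{(p)} = \hat{N}'_{(p)}$, and Lemma \ref{lem:Locally equal implies globally equal} forces $N = N'$, i.e.\ $f_x \in \mc{O}_L \otimes_\Z \mc{O}_L$. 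The same argument on $f_y$ yields $f \in N_1(\mc{O}_L \otimes_\Z \mc{O}_L)$. The main difficulty is not computational but organizational: one must carefully track the various ambient objects (the cocycle $f$, its image in $\prod_\sigma N_1(\mc{O}_L)$, the tensor description of the local coboundary, and the finitely generated submodule setup) and check that the embeddings between them are compatible, so that the chained lemmas compose consistently.
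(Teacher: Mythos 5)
Your proposal is correct and follows essentially the same route as the paper: Lemma \ref{lem:group scheme local kernel} to turn local triviality into triviality after base change to $\Z_p$, Lemma \ref{lem:coboundaries lie in descent data} to place the resulting coboundaries inside $N_1(\mc{O}_L \otimes \mc{O}_L \otimes \Z_p)$, and then a local-to-global membership argument for the submodule $\mc{O}_L \otimes_\Z \mc{O}_L \subset \prod_\sigma \mc{O}_L$. The only divergence is the final step, where the paper notes directly that the cokernel of $\mc{O}_L \otimes_\Z \mc{O}_L \into \prod_\sigma \mc{O}_L$ is finite and the image of $f$ in it has order coprime to every prime, while you instead apply Lemma \ref{lem:Locally equal implies globally equal} to $N' = N + \Z f_x$; both versions of this step are valid.
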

\begin{proof}
	Lemma \ref{lem:group scheme local kernel} tells us that $f$ lies in
	\[\bigcap_p \ker \left(Z^1(L/\Q, N_1(\mc{O}_L)) \to H^1(L/\Q, N_1(\mc{O}_L \otimes \Z_p))\right).\]
	By viewing the element $f$ as lying in $N_1(\dprod \mc{O}_L)$, Lemma \ref{lem:coboundaries lie in descent data} says it suffices to show that if an element $x \in \dprod \mc{O}_L$ is such that for all primes $p$, when viewing $x$ as an element of $\dprod \mc{O}_L \otimes \Z_p$, it lies in the image of $\mc{O}_L \otimes \mc{O}_L \otimes \Z_p$ inside $\dprod \mc{O}_L \otimes \Z_p$, then $x$ lies in the image of $\mc{O}_L \otimes_\Z \mc{O}_L$. More precisely, for all primes $p$, we have the commutative diagram of short exact sequences
	\[\begin{tikzcd} 0 \arrow[r] &\mc{O}_L \otimes_{\Z} \mc{O}_L \arrow[r] \arrow[d] & \dprod_{\Gal(L/\Q)} \mc{O}_L \arrow[r] \arrow[d] & \mf{a} \arrow[r] \arrow[d] & 0\\
	0 \arrow[r] &\mc{O}_L \otimes \mc{O}_L \otimes \Z_p \arrow[r] & \dprod_{\Gal(L/\Q)} \mc{O}_L \otimes \Z_p \arrow[r] & \mf{a} \otimes \Z_p \arrow[r] & 0\end{tikzcd}\]
	where $\mf{a}$ denotes the cokernel of the map, and the bottom sequence is exact because $\Z_p$ is flat over $\Z$. We are given an element $x \in \dprod \mc{O}_L$ whose image in $\mf{a}$ becomes trivial in $\mf{a} \otimes \Z_p$. Moreover, both $\mc{O}_L \otimes_Z \mc{O}_L$ and $\dprod \mc{O}_L$ have the same finite rank over $\Z$ and so the cokernel $\mf{a}$ must be finite. Letting $r$ be the order of $x$ in $\mf{a}$, the condition that $x$ is trivial in $\mf{a} \otimes \Z_p$ implies that $(r, p) = 1$ for all primes $p$. Consequently, the order of $x$ must be $1$ and hence $x$ lies in the image of $\mc{O}_L \otimes \mc{O}_L$.
\end{proof}

\begin{thm}\label{thm:quadratic form narrow class group bijection}
	There is a bijection between equivalence classes of primitive quadratic forms of discriminant $D$ modulo congruence by elements of $SL_2(\Z)$ and pairs $(\mf{a}, s)$ with $\mf{a} \subset K$ a fractional ideal and $s = \pm 1$ modulo the equivalence relation $(\mf{a}, s) \sim (\mf{b}, s')$ if and only if there exists some $x \in K$ such that $\mf{a} = x\mf{b}$ and $s = \frac{N(x)}{|N(x)|}s'$.
\end{thm}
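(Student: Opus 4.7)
The strategy is to construct explicit mutually inverse maps between the two sets and verify that both the equivalence relations and the round-trip compositions agree.

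For the forward direction, given a pair $(\mf{a}, s)$, choose an ordered $\Z$-basis $(\alpha, \beta)$ of $\mf{a}$ whose \emph{orientation} $(\alpha\bar{\beta} - \bar{\alpha}\beta)/\sqrt{D}$ has the same sign as $s$, and associate the binary quadratic form
\[
q_{\mf{a},s}(u,v) := s \cdot \frac{N(\alpha u + \beta v)}{N(\mf{a})}.
\]
An elementary expansion gives discriminant $D$, and primitivity follows from Corollary \ref{cor:binary quadratic form 1}. For the reverse direction, given a primitive $q(u,v) = au^2 + buv + cv^2$ with $b^2 - 4ac = D$, associate $\mf{a}_q := \Z + \Z \cdot \frac{-b + \sqrt{D}}{2a}$ and the sign $s_q := \mathrm{sgn}(a)$. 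A direct computation with the basis $\bigl(\tfrac{-b + \sqrt{D}}{2a},\, 1\bigr)$, which has orientation $1/a$ and hence sign $s_q$, together with $N(\mf{a}_q) = 1/|a|$, yields $q_{\mf{a}_q,s_q}(u,v) = cu^2 - buv + av^2$; this is $SL_2(\Z)$-congruent to $q$ via $(u,v) \mapsto (v, -u)$, so the two maps compose to the identity on form classes.

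The main verification is that the equivalences match. Changing $(\alpha, \beta)$ by $M \in SL_2(\Z)$ preserves orientation and produces the $SL_2(\Z)$-congruent form $q_{\mf{a},s} \circ M$. For the multiplicative equivalence $(\mf{a}, s) \sim (x\mf{a}, \mathrm{sgn}(N(x))s)$, the image basis $(x\alpha, x\beta)$ of $x\mf{a}$ has orientation $N(x) \cdot (\alpha\bar{\beta} - \bar{\alpha}\beta)/\sqrt{D}$, hence sign $\mathrm{sgn}(N(x))s$, matching the new pair. The key cancellation
\[
\mathrm{sgn}(N(x))s \cdot \frac{N(x\alpha u + x\beta v)}{N(x\mf{a})} \;=\; \mathrm{sgn}(N(x))s \cdot \frac{N(x)}{|N(x)|} \cdot \frac{N(\alpha u + \beta v)}{N(\mf{a})} \;=\; s \cdot \frac{N(\alpha u + \beta v)}{N(\mf{a})}
\]
shows that equivalent pairs produce literally the same form, so the forward map descends to a well-defined map on equivalence classes; the same analysis applied to the reverse map (together with the round-trip identity above) shows the two are mutually inverse bijections.

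The main obstacle is the careful sign bookkeeping: ensuring that the discrete invariant $s \in \{\pm 1\}$ really does track the orientation of a $\Z$-basis in precisely the way required so that the $SL_2(\Z)$-versus-$GL_2(\Z)$ distinction on forms corresponds exactly to the sign-flip component of the pair equivalence. The cancellation displayed above — where the $\mathrm{sgn}(N(x))$ arising from rescaling the basis is exactly offset by the $\mathrm{sgn}(N(x))$ factor in the pair equivalence — is what makes the bijection work and justifies the precise form of the equivalence relation on pairs stated in the theorem.
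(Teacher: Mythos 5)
Your construction is essentially the paper's: the paper defines the same forward map $(\mf{a},s)\mapsto s\cdot N(\alpha u+\beta v)/N(\mf{a})$ and then delegates every verification to Cohen, whereas you carry them out. Your orientation convention on the basis (which the paper leaves implicit, and without which the $SL_2(\Z)$-class is not well defined) is exactly the right fix, and the displayed cancellation of $\mathrm{sgn}(N(x))$ against $N(x)/|N(x)|$ is the correct mechanism; I checked your computations ($N(\mf{a}_q)=1/|a|$, orientation $1/a$, the round trip $q\mapsto cu^2-buv+av^2\sim q$) and they are right.

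Three points need repair or completion, none of which threatens the approach. First, your citation of Corollary \ref{cor:binary quadratic form 1} for primitivity is circular: that corollary's proof itself \emph{asserts} that $q_\mf{a}$ is primitive with no argument, so the logical dependence runs the other way. Primitivity needs its own (short, standard) proof, e.g.\ from $\mf{a}\bar{\mf{a}}=N(\mf{a})\mc{O}_K$ one sees that the coefficients $N(\alpha)$, $\alpha\bar\beta+\bar\alpha\beta$, $N(\beta)$ generate the ideal $N(\mf{a})\Z$. Second, you never verify that $\mf{a}_q=\Z+\Z\tau$ with $\tau=\frac{-b+\sqrt D}{2a}$ is actually an $\mc{O}_K$-module; this uses $D\equiv b\pmod 2$ and $a\tau^2+b\tau+c=0$ to check $\delta\cdot 1,\ \delta\tau\in\Z+\Z\tau$. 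Third, you compute the round trip form $\to$ pair $\to$ form but only assert the other one, and the other one is what gives injectivity of the forward map. It does hold: with your orientation convention one finds $\frac{-b+\sqrt D}{2a}=-\beta/\alpha$, hence $\mf{a}_{q_{\mf{a},s}}=\alpha^{-1}\mf{a}$ and $s_{q_{\mf{a},s}}=\mathrm{sgn}(N(\alpha))\,s$, which is equivalent to $(\mf{a},s)$ via $x=\alpha$ — but this should be written out rather than waved at, since it is where the equivalence relation on pairs is actually used in the injective direction.
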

\begin{proof}
	Choosing a basis $\alpha, \beta$ of $\mf{a}$, we map $(\mf{a}, s)$ to the binary quadratic form
	\[q_\mf{a}(x, y) = s \cdot \frac{N(\alpha x + \beta y)}{N(\mf{a})}.\]
	Refer to Cohen \cite[Thm.~5.2.9]{cohen} for details showing injectivity and surjectivity.
\end{proof}

\begin{lem}
	Let $\mf{a} \subset K$ be a fractional ideal and $f_\mf{a}: \mf{a}^2 \to \Z$ be an anti-symmetric bilinear form. Let $\alpha, \beta$ be a basis of $\mf{a}$ chosen such that $\frac{\bar{\alpha}\beta - \alpha \bar{\beta}}{\sqrt{D}} = N(\mf{a}) > 0$. Let $A$ be a ring. Then we can extend $f_\mf{a}$ to $f_\mf{a}: (\mf{a} \otimes A)^2 \to A$ and the norm map $N: K \to \Z$ to $N: K \otimes A \to A$. Moreover, for $m \in \mf{a} \otimes A$,
	\[f_\mf{a}(m, m\delta) = \frac{N(m)}{N(\mf{a})} f_\mf{a}(\alpha, \beta).\]
\end{lem}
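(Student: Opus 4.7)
The plan is to exploit that the space of anti-symmetric $A$-bilinear forms on the rank-$2$ free $A$-module $\mf{a}\otimes A$ is itself free of rank one, so $f_\mf{a}$ is determined by its single value $f_\mf{a}(\alpha,\beta)$, and then to compute both sides against a single Galois-theoretic normal form.

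First I would extend $f_\mf{a}$ to $(\mf{a}\otimes A)^2\to A$ by $A$-bilinearity, and extend the nontrivial Galois automorphism of $K/\Q$ to an involution $u\mapsto\bar{u}$ on $K\otimes A$ acting trivially on the second factor, so that $N(u)=u\bar{u}$ lies in the invariants $A\subset K\otimes A$. Introduce the auxiliary anti-symmetric pairing
\[\omega(u,v)=\frac{\bar{u}v-u\bar{v}}{\sqrt{D}},\]
defined by the property that its numerator is the Galois-anti-invariant element $\omega(u,v)\cdot\sqrt{D}$; in particular $\omega$ lands in $A$. The normalization hypothesis on the basis then reads $\omega(\alpha,\beta)=N(\mf{a})$.

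Since both $f_\mf{a}$ and $\omega$ are anti-symmetric and $A$-bilinear on the rank-$2$ module $\mf{a}\otimes A$, writing $m=\alpha\otimes x+\beta\otimes y$ and $n=\alpha\otimes x'+\beta\otimes y'$ gives
\[f_\mf{a}(m,n)=(xy'-x'y)\,f_\mf{a}(\alpha,\beta),\qquad \omega(m,n)=(xy'-x'y)\,N(\mf{a}),\]
which yields the universal relation
\[N(\mf{a})\cdot f_\mf{a}(m,n)=f_\mf{a}(\alpha,\beta)\cdot\omega(m,n)\]
in $A$, valid without inverting anything.

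To close the proof, specialize $n=m\delta$. Since $\delta=(D+\sqrt{D})/2$ satisfies $\delta-\bar{\delta}=\sqrt{D}$, we compute
\[\omega(m,m\delta)=\frac{\bar{m}\cdot m\delta-m\cdot\bar{m}\bar{\delta}}{\sqrt{D}}=m\bar{m}\cdot\frac{\delta-\bar{\delta}}{\sqrt{D}}=N(m).\]
Substituting into the universal relation produces the desired identity. The only genuine subtlety is making $\omega$ well-defined over an arbitrary ring $A$, i.e.\ identifying the Galois $(-1)$-eigenspace of $K\otimes A$ with $\sqrt{D}\cdot A$; this is cleanest to handle by proving the universal relation first over $A=\Z[x,y]$ (where $\sqrt{D}$ is invertible after tensoring with $\Q$) and then base-changing to arbitrary $A$, so no real obstacle arises.
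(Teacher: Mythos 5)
Your proposal is correct and follows essentially the same route as the paper: both arguments expand $f_\mf{a}(m,n)=(xy'-x'y)f_\mf{a}(\alpha,\beta)$ by bilinearity and antisymmetry, identify the determinant $xy'-x'y$ with the Galois expression $\frac{\bar{m}n-m\bar{n}}{\bar{\alpha}\beta-\alpha\bar{\beta}}$ (your $\omega$ is exactly this quantity rescaled by $N(\mf{a})$), and then specialize $n=m\delta$ using $\delta-\bar{\delta}=\sqrt{D}$ to produce $N(m)$. Your extra care about defining $\omega$ over an arbitrary ring $A$ via base change from $\Z[x,y]$ is a reasonable tightening of a point the paper leaves implicit, but it does not change the substance of the argument.
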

\begin{proof}
	In general, we have that
	\[f_\mf{a}(\alpha \otimes x_1 + \beta \otimes y_1, \alpha \otimes x_2 + \beta \otimes y_2) = (x_1y_2 - x_2y_1)f_\mf{a}(\alpha, \beta) = \frac{z_1 \overline{z_2} - z_2 \overline{z_1}}{\alpha \overline{\beta} - \beta \overline{\alpha}} f_\mf{a}(\alpha, \beta),\]
	where $z_1 = \alpha \otimes x_1 + \beta \otimes y_1$ and $z_2 = \alpha \otimes x_2 + \beta \otimes y_2$, and $\overline{\alpha \otimes x + \beta \otimes y} = \overline{\alpha} \otimes x + \overline{\beta} \otimes y$. Combining this with $\overline{\delta} - \delta = -\sqrt{D}$ and $N(m) = m \cdot \overline{m}$, we get our result.
\end{proof}

\begin{thm}
	$\Sha(\mc{C}/\Z)$ has an interpretation as equivalence classes of binary quadratic forms with discriminant $D$ that represent $1$ in $\Z_p$ for all primes $p$.
\end{thm}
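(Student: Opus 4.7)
The plan is to obtain the desired bijection by composing the correspondence $\Sha(\mc{C}/\Z) \cong (Cl^+(K))^2$ from Theorem \ref{thm:Bijection between Sha and 2Cl+(K)} with the pair-to-form dictionary of Theorem \ref{thm:quadratic form narrow class group bijection}. The crucial link is Lemma \ref{lem:quadratic form isomorphism}: the condition that $\mf{a} \otimes \Z_p$ be cyclic over $\mc{O}_K \otimes \Z_p$ with a norm-$1$ generator---which is precisely the local condition extracted from $\Sha$-cocycles in the proof of Theorem \ref{thm:Bijection between Sha and 2Cl+(K)}---translates into the associated binary quadratic form representing $1$ in $\Z_p$.

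Concretely, I would define $\Phi : \Sha(\mc{C}/\Z) \to \{\text{primitive forms of discriminant } D \text{ representing } 1 \text{ in every } \Z_p\}/SL_2(\Z)$ by sending $[c]$ to $[q_\mf{a}]$, where $\mf{a}$ is a norm-$1$ fractional ideal representing the image of $[c]$ in $(Cl^+(K))^2$, $\alpha, \beta$ is an ordered $\Z$-basis of $\mf{a}$, and $q_\mf{a}(x,y) = N(\alpha x + \beta y)$ is the form attached to the pair $(\mf{a}, 1)$. Well-definedness: two norm-$1$ representatives of the same narrow class differ by an element of $K^\times$ of norm $1$, yielding equivalent pairs and hence $SL_2(\Z)$-equivalent forms, while any two ordered $\Z$-bases of $\mf{a}$ differ by $SL_2(\Z)$. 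For the forward direction, the proof of Theorem \ref{thm:Bijection between Sha and 2Cl+(K)} produces for each prime $p$ a generator $u_p = \alpha \otimes x_p + \beta \otimes y_p$ of $\mf{a} \otimes \Z_p$ with $N(u_p) = 1$, and Lemma \ref{lem:quadratic form isomorphism} then gives $q_\mf{a}(x_p, y_p) = N(u_p)/N(\mf{a}) = 1$.

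Injectivity is immediate: if $q_{\mf{a}_1}$ and $q_{\mf{a}_2}$ are $SL_2(\Z)$-equivalent, then by Theorem \ref{thm:quadratic form narrow class group bijection} the pairs $(\mf{a}_1, 1)$ and $(\mf{a}_2, 1)$ are equivalent, so $\mf{a}_2 = t\mf{a}_1$ with $N(t) > 0$, and $[\mf{a}_1] = [\mf{a}_2]$ in $Cl^+(K)$. The hard part will be surjectivity. Given a primitive form $q$ of discriminant $D$ representing $1$ in every $\Z_p$, Theorem \ref{thm:quadratic form narrow class group bijection} supplies a pair $(\mf{b}, s)$, and the local solutions of $q = 1$ show that $sN(\mf{b}) \in \Q^\times$ is a local norm from $K \otimes \Q_p$ at every finite prime $p$. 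To invoke the Hasse norm theorem I also need local normality at the archimedean place: when $K$ is real quadratic, every nonzero rational is a local norm at $\infty$, while when $K$ is imaginary quadratic the product formula for Hilbert symbols forces $s = 1$, since $sN(\mf{b}) < 0$ would otherwise be a non-norm only at $\infty$, violating $\prod_v (sN(\mf{b}), D)_v = 1$. The Hasse norm theorem then yields $t \in K^\times$ with $N(t) = sN(\mf{b})$, and $\mf{a} := t^{-1}\mf{b}$ is a norm-$1$ ideal satisfying $(\mf{a}, 1) \sim (\mf{b}, s)$; by Theorem \ref{thm:Bijection between Sha and 2Cl+(K)} its class lies in $(Cl^+(K))^2$, and the corresponding cohomology class in $\Sha(\mc{C}/\Z)$ is sent by $\Phi$ to $[q]$, completing the bijection.
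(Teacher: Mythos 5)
Your proposal is correct, but it takes a genuinely different route from the paper's. The paper does not pass through Theorem \ref{thm:Bijection between Sha and 2Cl+(K)} at all: it reruns the computation through faithfully flat descent (Corollary \ref{cor:Descent Theory}), identifying a locally trivial cocycle with descent data on $\mc{O}_K$ equipped with the antisymmetric pairing $f(1,\delta)=1$, so that an element of $\Sha(N_1/\Z)$ literally \emph{is} a twisted form $(\mf{a}, f_{\mf{a}})$ of $(\mc{O}_K, f)$ that becomes trivial over every $\Z_p$; Lemma \ref{lem:quadratic form isomorphism} then converts ``trivial over $\Z_p$'' into ``$q_{\mf{a}}$ represents $1$ in $\Z_p$'' directly. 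That route buys the structural content behind the word ``interpretation'' in the statement (Sha-elements \emph{are} the locally trivial forms), at the cost of the descent machinery of Section 4. Your route buys economy: you reuse Theorems \ref{thm:Bijection between Sha and 2Cl+(K)} and \ref{thm:quadratic form narrow class group bijection} and only track the local conditions, which you do correctly --- the norm-$1$ local generators $u_p$ from the proof of Theorem \ref{thm:Bijection between Sha and 2Cl+(K)} give local representability of $1$ via Lemma \ref{lem:quadratic form isomorphism}, and in the surjectivity step the Hilbert-symbol product formula together with the Hasse norm theorem reduces an arbitrary locally representing form to a norm-$1$ ideal. Your archimedean analysis (forcing $s=1$ when $D<0$) is a genuine addition; the paper's proof leaves the sign constraint implicit. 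Two small repairs: two ordered $\Z$-bases of $\mf{a}$ differ by $GL_2(\Z)$, not $SL_2(\Z)$, so you must normalize the orientation $\frac{\conj{\alpha}\beta - \alpha\conj{\beta}}{\sqrt{D}} > 0$ as the paper does before invoking well-definedness up to $SL_2(\Z)$; and you should remark that every form of the fundamental discriminant $D$ is automatically primitive, so that Theorem \ref{thm:quadratic form narrow class group bijection} applies to every form in your target set.
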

\begin{proof}
	It suffices to look at $\Sha(N_1/\Z)$, which is the projective limit over all finite extensions $L/\Q$. So, for fixed $L/\Q$, consider
	\[\ker\left(H^1(L/\Q, N_1(\mc{O}_L)) \to \prod_p H^1(L_v/\Q_p, N_1(\mc{O}_{L_v}))\right).\]
	By Proposition \ref{prop:Kernel is Automorphism}, this is precisely elements of $N_1(\mc{O}_L \otimes \mc{O}_L)$, up to a coboundary condition, that satisfy the $1$st order cochain condition and become coboundaries locally.
	
	We use descent theory. We know that $\mc{O}_L$ is free, and hence faithfully flat, over $\Z$. Thus, Corollary \ref{cor:Descent Theory} tells us that $\mc{O}_L/\Z$ forms of $\mc{O}_K$ are naturally equivalent to automorphisms of $\mc{O}_K \otimes \mc{O}_L \otimes \mc{O}_L$ satisfying a relation between $d^0\varphi, d^1\varphi, d^2\varphi$, modulo a relation in terms of $d^0\lambda, d^1\lambda$. The additional algebraic structure we impose is a structure of $\mc{O}_K$ as an $\mc{O}_K$-module, and an antisymmetric bilinear map $f: \mc{O}_K^2 \to \Z$. The $2$-form we endow $\mc{O}_K$ with is determined by $f(1, \delta) = 1$. All $\mc{O}_K \otimes \mc{O}_L \otimes \mc{O}_L$-linear automorphisms of itself are given by units $a \in (\mc{O}_K \otimes \mc{O}_L \otimes \mc{O}_L)^\times$. The fact that this automorphism must preserve this map means that $f(a, a\delta) = 1$. A quick calculation shows $f(a, a \delta) = N(a) = 1$ and so we are looking for automorphisms that are $\mc{O}_L \otimes \mc{O}_L$ points of $N_1$. Given $\varphi = (\sum a_i \otimes b_i, \sum c_i \otimes d_i) \in N_1(\mc{O}_L \otimes \mc{O}_L)$, the condition that $(d^0\varphi)(d^2\varphi) = d^1\varphi$ tells us that
	\[\left(\sum 1 \otimes a_i \otimes b_i, \sum 1 \otimes c_i \otimes d_i\right) + \left(\sum a_i \otimes b_i \otimes 1, \sum c_i \otimes d_i \otimes 1\right)\]
	\[= \left(\sum a_i \otimes 1 \otimes b_i, \sum c_i \otimes 1 \otimes d_i\right).\]
	We can embed $\mc{O}_L \otimes \mc{O}_L \otimes \mc{O}_L \into \prod_{\Gal(L/\Q)}\prod_{\Gal(L/\Q)} \mc{O}_L$ via $a \otimes b \otimes c \mapsto (\sigma(a)\tau(b)c)_{\sigma, \tau}$ and hence this condition is equivalent to
	\[\left(\sum \tau(a_i)b_i, \sum \tau(c_i)d_i\right) + \left(\sum \sigma(a_i)\tau(b_i), \sum \sigma(c_i)\tau(d_i)\right)\]
	\[= \left(\sum \sigma(a_i)b_i, \sum \sigma(c_i)d_i\right).\]
	However, notice that this is exactly the same as the condition for $1$ cochains which says that $f(\sigma) = f(\tau) + \tau f(\tau^{-1} \sigma)$ for all $\sigma, \tau$. Finally, we say that two automorphisms $\varphi, \varphi' \in N_1(\mc{O}_L \otimes \mc{O}_L)$ give isomorphic modules if there exists some $\lambda \in N_1(\mc{O}_L)$ such that $\varphi' = d^0\lambda + \varphi - d^1\lambda$. Letting $\lambda = (x, y) \in N_1(\mc{O}_L)$, we have that $\varphi - \varphi' = (x \otimes 1, y \otimes 1) - (1 \otimes x, 1 \otimes y) = (\sigma(x), \sigma(y))_\sigma - (x, y)_\sigma \in N_1(\prod \mc{O}_L)$, which is precisely the $1$ coboundary condition.
	
	The condition that locally the descent data is of the form $d^0\lambda - d^1\lambda$ for some $\lambda \in N_1(\mc{O}_L \otimes \Z_p)$ says that this $\mc{O}_L/\Z$ form of $\mc{O}_K$, when viewed as a $(\mc{O}_L \otimes \Z_p)/\Z_p$ form of $\mc{O}_K \otimes \Z_p$, is already isomorphic to $\mc{O}_K \otimes \Z_p$. Therefore, we can characterize $\Sha(N_1/\Z)$ as $\mc{O}_K$-modules $\mf{a}$ with an anti-symmetric bilinear map $f_\mf{a}: \mf{a}^2 \to \Z$ satisfying the following conditions:
	\begin{enumerate}[(1)]
		\item There is an isomorphism $\mf{a} \otimes \mc{O}_L \cong \mc{O}_K \otimes \mc{O}_L$ of $\mc{O}_K \otimes \mc{O}_L$-modules that preserves the map. And
		
		\item for every prime $p$, there is an isomorphism $\mf{a} \otimes \Z_p \cong \mc{O}_K \otimes \Z_p$ of $\mc{O}_K \otimes \Z_p$-modules preserving the bilinear map as well.
	\end{enumerate}
	Two modules are equivalent $(\mf{a}, f_\mf{a}) \cong (\mf{b}, f_\mf{b})$ if there is an $\mc{O}_K$-linear isomorphism $\mf{a} \to \mf{b}$ that sends $f_\mf{a}$ to $f_\mf{b}$.
	
	By the decomposition theorem for modules over Dedekind domains and the fact that $\mf{a} \otimes \mc{O}_L \cong \mc{O}_K \otimes \mc{O}_L$, we know that $\mf{a}$ must be torsion free and isomorphic to an ideal of $\mc{O}_K$. Consequently, $\mf{a}$ can be viewed as a fractional ideal of $K$, so it is a free $\Z$-module of rank $2$. Choose two generators $\alpha = ca, \beta = c\frac{b + \sqrt{D}}{2}$ and note that $\frac{\bar{\alpha}\beta - \alpha \bar{\beta}}{\sqrt{D}} > 0$.
	
	Lemma \ref{lem:quadratic form isomorphism} tells us that there is an isomorphism $\mc{O}_K \otimes \mc{O}_L\xrightarrow{\sim} \mf{a} \otimes \mc{O}_L$ if and only if there exist $x, y \in \mc{O}_L$ such that $q_\mf{a}(x, y)  \in \mc{O}_L^\times$, and this isomorphism sends $1 \otimes 1 \mapsto \alpha \otimes x + \beta \otimes y$. The bilinear map must be preserved and it suffices to require
	\[1 = f(1, \delta) = f_\mf{a}(\alpha \otimes x + \beta \otimes y, \alpha \delta \otimes x + \beta \delta \otimes y) = q_\mf{a}(x, y)f_\mf{a}(\alpha, \beta).\]
	Consequently, the isomorphism preserves the bilinear map as well if and only if $f_\mf{a}(\alpha, \beta) = \pm 1$ and $q_\mf{a}$ represents $f_\mf{a}(\alpha, \beta)$ in $\mc{O}_L$.
	
	Returning to the problem at hand, we wish to classify pairs $(\mf{a}, s)$, where $\mf{a}$ is a fractional ideal of $K$ and $s = \pm1 = f_\mf{a}(\alpha, \beta)$ which satisfy certain conditions, modulo an equivalence relationship. This relation is $(\mf{a}, s) \sim (\mf{b}, s')$ if and only if there exists some $x \in K^\times$ such that $\mf{a} = x\mf{b}$ and preserves the bilinear map, which corresponds to $s = \frac{N(x)}{|N(x)|} s'$. The first property that a pair $(\mf{a}, s)$ must satisfy is that there is a bilinear form preserving isomorphism $\mf{a} \otimes \mc{O}_L \cong \mc{O}_K \otimes \mc{O}_L$, which is equivalent to $s$ being representable by $q_\mf{a}$ in $\mc{O}_L$. By Corollary \ref{cor:binary quadratic form 1}, this always holds for all large enough $L$. The second condition is that for every prime $p$, there is a bilinear form preserving isomorphism $\mf{a} \otimes \Z_p \cong \mc{O}_K \otimes \Z_p$, which is equivalent to $s$ being representable by $q_\mf{a}$ in $\Z_p$. Thus $\Sha(\mc{C}/\Z)$ corresponds to equivalence classes of binary quadratic forms $s\mf{q}_a$ that represent $1$ in $\Z_p$ for all primes $p$.
\end{proof}

\begin{remark}
	The proof in Theorem \ref{thm:Bijection between Sha and 2Cl+(K)} uses the same idea of descent data implicitly. To see this, giving an automorphism of $\mc{O}_K \otimes \mc{O}_L \otimes \mc{O}_L$ that preserves the bilinear form is equivalent to giving an element $m \in (\mc{O}_K \otimes \mc{O}_L \otimes \mc{O}_L)_1^\times$. Write $m_\sigma \in \mc{O}_K \otimes \mc{O}_L$ to be the $\sigma \in \Gal(L/\Q)$ projection of $m$ when $m$ is viewed as an element in $\dprod \mc{O}_K \otimes \mc{O}_L$. Then, the $\mc{O}_K$-module associated with $m$ is the set
	\[\{x \in \mc{O}_K \otimes \mc{O}_L: m \cdot (x \otimes 1) = \theta(x \otimes 1)\} = \{x \in \mc{O}_K \otimes \mc{O}_L: \forall \sigma \in \Gal(L/\Q), m_\sigma \cdot \sigma(x) = x\}.\]
	This is exactly the fixed points of $\mc{O}_K \otimes \mc{O}_L$ under the twisted action by a cocycle $c$ since we viewed a cocycle $c$ as an element of $\dprod \mc{O}_K \otimes \mc{O}_L$.
\end{remark}

\begin{remark}
	This proof requires more machinery than the proof of Theorem \ref{thm:Bijection between Sha and 2Cl+(K)}, but it provides an enlightening interpretation for the Tate--Shafarevich group as measuring the failure of the Hasse principle over the integers. To be more explicit, the Hasse principle states that a binary quadratic form $Q$ represents a rational number $q \in \Q$ if and only if it represents $q$ in $\Q_p$ for all primes $p$. This is justification for why Proposition \ref{prop:Cohomology Q points trivial} holds. With our interpretation, the Tate--Shafarevich group is quantifying how many binary quadratic forms represent $1$ in $\Z_p$ for all primes $p$, but do not represent $1$ in $\Z$. Thus in this sense, the group is capturing the failure of the Hasse principle over the integers.
\end{remark}

	\bibliographystyle{acm}
	\bibliography{thesis}

\begin{thebibliography}{1}

\bibitem{birch1985}
{\sc Birch, B.}
\newblock When does an affine curve have an algebraic integer point?
\newblock {\em Glasgow Mathematical Journal 27\/} (1985), 1--4.

\bibitem{cohen}
{\sc Cohen, H.}
\newblock {\em A course in computational algebraic number theory}, vol.~138.
\newblock Springer Science \& Business Media, 2013.

\bibitem{lemm03}
{\sc Lemmermeyer, F.}
\newblock Conics-a poor man's elliptic curves.
\newblock {\em arXiv preprint math/0311306\/} (2003).

\bibitem{waterhouse}
{\sc Waterhouse, W.~C.}
\newblock {\em Introduction to affine group schemes}, vol.~66.
\newblock Springer Science \& Business Media, 2012.

\end{thebibliography}
	
\end{document}